\allowdisplaybreaks \numberwithin{equation}{section}
\numberwithin{equation}{section}
\newtheorem{theorem}{Theorem}[section]
\newtheorem{proposition}[theorem]{Proposition}
\newtheorem{corollary}[theorem]{Corollary}
\newtheorem{lemma}[theorem]{Lemma}
\theoremstyle{definition}
\theoremstyle{remark}
\newtheorem{remark}[theorem]{Remark}
\begin{document}

\title
{Multi-soliton dynamics in the nonlinear Schr\"{o}dinger equation}

 \author{Daomin Cao, Qing Guo, Changjun Zou}
\address{Institute of Applied Mathematics, Chinese Academy of Sciences, Beijing 100190, and University of Chinese Academy of Sciences, Beijing 100049,  P.R. China}
\email{dmcao@amt.ac.cn}

\address{College of Science, Minzu University of China, Beijing 100081,  P.R. China}
\email{guoqing0117@163.com}

\address{Institute of Applied Mathematics, Chinese Academy of Sciences, Beijing 100190, and University of Chinese Academy of Sciences, Beijing 100049,  P.R. China}
\email{zouchangjun17@mails.ucas.ac.cn}


\begin{abstract}
In this paper, we study the Cauchy problem of the nonlinear Schr\"{o}dinger equation with a nontrival potential $V_\varepsilon(x)$. In particular, we consider the case where the initial data is close to a superposition of $k$ solitons with prescribed phase and location, and investigate the evolution of the Schr\"{o}dinger system. We prove that over a large time interval with the maximum time tending to infinity, all $k$ solitons will maintain the shape, and the solitons dynamics can be regarded as an approximation of $k$ particles moving in $\mathbb{R}^N$ with their accelerations dominated by $\nabla V_\varepsilon$, provided the barycenters of these solitons do not coincide.  
\end{abstract}

\maketitle

\section{Introduction}
We will consider the following Cauchy problem of the nonlinear Schr\"{o}dinger equation (NSE) in $\mathbb{R}^1\times\mathbb{R}^{N}$:
\begin{numcases}
{}
\label{1-1} \mathbf{i}\varepsilon\partial_t\psi=-\varepsilon^2\Delta\psi+V_\varepsilon(x)\psi-f(\psi), \\
\label{1-2} \psi(0,x)=\psi_0(x), 
\end{numcases}
where $\mathbf{i}=\sqrt{-1}$ is the imaginary unit, $\varepsilon$ denotes the Planck constant, $\psi: \mathbb{R}^1\times\mathbb{R}^{N}\to \mathbb{C}$, $V_\varepsilon(x)$ is a real-valued potential function, $f(\psi)$ is the nonlinear term mapping the complex Sobolev space $H^1(\mathbb{R}^N,\mathbb{C})$ to $H^{-1}(\mathbb{R}^N,\mathbb{C})$ satisfiying $f(0)=0$ and $f(e^{\mathbf{i}\theta}u)=e^{\mathbf{i}\theta}f(u)$. Over last decades, the NSE was investigated by many mathematians since it arises in the theory of Bose-Einstein condensation, nonlinear optics, theory of water waves and in many other areas.

For simplicity, we will first study a version of \eqref{1-1} and \eqref{1-2}:
\begin{equation}\label{1-3}
\begin{cases}
\mathbf{i}\partial_t\psi=-\varepsilon^2\Delta\psi+V_\varepsilon(x)\psi-f(\psi),\\
\psi(0,x)=\psi_0(x),\\
\end{cases}
\end{equation}
where we do not scale the time parameter. Then, we can scale on time to revisit the property of solutons to Cauchy problem \eqref{1-1} and \eqref{1-2}. The suitable $H^{1}(\mathbb{R}^N,\mathbb{C})$ norm for this problem is given by 
\begin{equation}\label{1-4}
||u||_{H^1}^2:=\int_{\mathbb{R}^N}\frac{1}{\varepsilon^N}(\varepsilon^2|\nabla u|^2+|u|^2)dx,
\end{equation}
with $|z|^2=(\mathbf{Re}z)^2+(\mathbf{Im}z)^2$ for a complex number $z$. By classical regularity theory of the NSE, when $f$ is of the special form
\begin{equation}\label{1-5}
f(\psi)=\lambda|\psi|^{p-1}\psi
\end{equation}
with $1<p<1+\frac{4}{N}$ and $\lambda>0$, the nonlinear Schr\"{o}dinger equation has a global solution $\psi(x,t)\in C(\mathbb{R}^N; H^1(\mathbb{R}^N,\mathbb{C}))\cap C(\mathbb{R}^N; H^{-1}(\mathbb{R}^N,\mathbb{C}))$, provided the initial data $\psi(0,x)=\psi_0(x)$ is in $H^1(\mathbb{R}^N,\mathbb{C})$. Readers can consult \cite{Caz,Ka} for more results on the Cauchy problem of the nonlinear Schr\"{o}dinger equation. In the next section, we will impose proper constraints on the potential $V_\varepsilon$ and the nonlinearity $f$.

When $V_\varepsilon(x)\equiv 0$, if $f$ is given by \eqref{1-5} and $\lambda$ is large enough, \eqref{1-3} can have a stable solitary wave solution of the form
\begin{equation}\label{1-6}
\psi(x,t)=e^{\mathbf{i}(\frac{1}{2}v\cdot(\frac{x-a}{\varepsilon})+\gamma)}\eta_{\mu,\varepsilon}(x-a),
\end{equation}
where $v\in \mathbb{R}^N$ denotes the velocity of solitary wave, $a=vt+a_0$ with the initial location $a_0\in \mathbb{R}^N$ , $\gamma=\mu t+\frac{|v|^2}{4}t+\gamma_0$ with the initial phase $\gamma_0$, $\eta_{\mu,\varepsilon}$ is a radially symmetric solution of following nonlinear eigenvalue problem
\begin{equation}\label{1-7}
-\varepsilon^2\Delta u+\mu u-f(u)=0, \ \ \ x\in \mathbb{R}^N
\end{equation}
with $\mu>0$. In this case, solitary waves travel with a constant velocity $v$ and an oscillatory phase $\mu t-\frac{1}{4}|v|^2t+\gamma_0$. From \cite{Gi} we know that $\eta_{\mu,\varepsilon}>0$ satisfies $\lim\limits_{|x|\to\infty}\nabla\eta_{\mu,\varepsilon}/\eta_{\mu,\varepsilon}=-1$ and
\begin{equation}\label{1-8}
\lim\limits_{|x|\to\infty}\eta_{\mu,\varepsilon}(x)e^{\alpha\frac{|x|}{\varepsilon}}=0
\end{equation} 
with some $\alpha>0$. Thus as $\varepsilon\to 0^+$, the mass of the solitary wave, which is denoted by the $L^2$ norm, will concentrate and behave like a particle. 

Solitary waves with this concentrating property are usually called solitons, and the process $\varepsilon\to 0^+$ is known as semiclassical limit of the nonlinear Schr\"{o}dinger equation. The existence of standing wave solutions with solitons located near nondegenerate and isolated critical points of potential function was proved by a finite dimentional reduction method. For equations with a nonsymmetric nonlinear term $Q(x)|\psi|^{p-1}\psi$, multi-peak solutions are constructed in \cite{CY1,CY2}. However, in this case, the location of solitons is determined by $Q(x)$ instead of the potential function. For relevant result we refer to \cite{CY3,WY}, in which different kinds of steady soliton solutions were constructed.

Now, we turn to soliton evolution in general case where $V_\varepsilon(x)\not\equiv 0$. Inspired by above observation, for the case where initial data is close to a single soliton with given velocity and phase, it is guessed that the soliton will maintain its shape, and move under the action of potential $V_\varepsilon(x)$. In \cite{Br}, for nonlinearity given by \eqref{1-5}, Bronski and Jerrard proved that if the initial data satisfies foresaid condition (be a small perturbation of $\psi$ given by \eqref{1-6}), as $\varepsilon\to 0^+$, the solution $\psi(x,t)$ of \eqref{1-1} and \eqref{1-2} satisfies
\begin{equation}\label{1-9}
\frac{1}{\varepsilon^N}|\psi(\frac{x}{\varepsilon},\frac{t}{\varepsilon})|dx\to ||\eta_{\mu,\varepsilon}||^2_{L^2(\mathbb{R}^N)}\delta_{a(t)}
\end{equation}
in the $C^{1*}(\mathbb{R}^N)$, which is the dual to $C^1(\mathbb{R}^N)$. They have showed that $a(t)$, the barycenter of soliton, satisfies $\frac{1}{2} \ddot a=\nabla V_\varepsilon(x)+o(\varepsilon)$. Then Keraani \cite{Ke1,Ke2} devoloped their method and gave a more precise estimate for the dynamics. In \cite{B1,B2,B3}, Benci, Ghimenti and Micheletti also work on this problem. By analysis of total energy and a classical concentration-compactness lemma \cite{L1,L2,St,W}, they obtained similar dynamics for single soliton. However, in order to ensure the global existance of stable solitons, they gave a blow up to the nonlinearity term so that the energy is dominated by $f(\psi)$.

The foresaid results were obtained on the basis of orbital stability of NLS solitary waves, which was first proven by Cazenave and Lions for trival potential in \cite{Caz1}. For nontrival $C^2$ potential with a nondegenerate minimum point $x_0$, Grillakis et al. \cite{Gri1,Gri2} proved the orbital stability of standing solitary wave whose center is located near $x_0$. To obtain solitons dynamics in an external potential, a further uniqueness result is needed, which claims a function $\psi\in H^1(\mathbb{R}^N)$ is close to a ground state solution in $H^1$ sence, if and only if its energy is near the minimal energy. Weinstein \cite{We1,We2} proved this uniqueness of ground state solutions in 1 and 3 space dimensions. In \cite{Kw}, Kwong combined with Weinstein's arguments and proved the same result in all space dimensions.

It is worth mentioning that all above results are on the dynamics of a single soliton with nontrival potential so far. We may ask the following questions: If we put $k$ solitons in $\mathbb{R}^N$ and let them envolve under the domination of \eqref{1-1} and \eqref{1-2}, then how will their trajectories be like? And will these solitons influence each other? In this paper, we will exactly focus on this topic and give the dynamics formula of multi-peak evolution under appropriate assumptions. In particular, we will prove that if the barycenters of these $k$ solitons do not coincide, then for $t\in (0,\frac{T_0}{\varepsilon}]$ the solitons dynamics are mainly determined by $\nabla V_\varepsilon$, while the interaction among the solitons can be ignored. 

To study the multi-solitons dynamics, traditional method is invalid since we can not use a minimization argument on energy, which is crucial in \cite{B1,B2,B3,Br}. We will put forward a new method, which is inspired by Fr\"{o}hlich et al. \cite{Fr1,Fr2} investigating the Hartree equations, and \cite{Fr3,Fr4} on the dynamics of solitary waves. The main idea of this paper is introducing the Plank constant $\varepsilon$ as a scaling parameter, and decomposing the solution $\psi(x,t)$ as $\Phi(x,t)+\omega(x,t)$, where $\Phi(x,t)$ is in the multi-soliton manifold $\mathbf{M}_k$ (defined in Section 4 and 5), and $\omega(x,t)$ is the perturbation term. In order to obtain the dynamic for each single soliton, we will make smooth truncations on $\psi$ to separate each soliton from others and regard the truncated function as an approximation of solitons. These truncations will cause great difficulty since the conservation of mass and momentum doesn't hold any more, and there is interaction part from other $k-1$ solitons in every truncated function. Thanks to the transport formula for mass and momentum in NSE (introduced in Section 3) and the exponential decay rate of ground state solutions \eqref{1-8}, we can give a precise estimate for each truncated part of $\psi$ and prove that the interaction among $k$ solitons is indeed a small term. 

In \cite{Ab}, Abou Salem et al. also considered this problem using a similar decomposition and studied the case where solitons get close to each other with barycenters not coinciding. However, they mainly focused on the conservation of mass and estimate the $L^2$ norm of perturbation term $w$. While we give a more precise result by dividing $w$ into $k$ parts and estimating the $H^1$ norm of each part using the energy method. As a consequence of this difference, in \cite{Ab} the potential $V_\varepsilon$ can depend on $t$, but the local nonlinearity can only be of the form $f=|\psi|\cdot\psi$ when $|\psi|$ is very large. While we improve the local nonlinearity to the form $f=|\psi|^{p-1}\psi$ with $1<p<1+\frac{4}{n}$, at the cost of requiring $V_\varepsilon$ only depends on $x$. On the other hand, the maximum time in \cite{Ab} depends on large initial relative velocity $v_{i,j}(0)$ or large initial distance $d_{i,j}(0)$ of two different solitons. While in our paper, we do not assume $|v_{i,j}(0)|\gg1$ or large separation of solitons. By introducing the Plank constant $\varepsilon$, we can prove the maximum time tends to infinity as $\varepsilon\to 0$ provided the barycenters of $k$ solitons do not collide.

Our paper is organized as follows: After giving the definition of multi-soliton manifold $\mathbf{M}_k$ and parameters we need in Section 3 and 4, we will prove that the skew orthogonal decomposition is unique for $\psi$ satisfying a couple of assumptions in Section 5. Then, we calculate derivative with respect to time of each variable to give the modulation formula for solitons dynamics in Section 6. The necessary estimate for time derivative of Lyapunov functional is given in Section 7. In Section 8, we finish our proof by showing that the $H^1$ norm of $\omega(x,t)$ is small for every $t$ such that $\psi$ is in the neighborhood $U_\delta$ of $\mathbf{M}_k$.

\section{Assumptions and the main results}
In this section, we first formulate a couple of assumptions on the potential $V_\varepsilon(x)$, the nonlinearity $f$ and the initial data $\psi_0$. These assumptions are important since even a small change of variables can bring big influence to the multi-soliton dynamics. 
\begin{itemize}
	\item[(A)] (Potential) The external potential $V_\varepsilon(x)\equiv V(\varepsilon_vx)$, where $V(x)\in C^2(\mathbb{R}^N)$ and $\varepsilon_v=\varepsilon^h$ for some $h>2$, that is, for any $\alpha\in (0,2]$
	 $$|\partial_x^\alpha V_\varepsilon(x)|\le C_\alpha \varepsilon_v^{|\alpha|},$$
	 where $C_\alpha$ is a positive constant depending on $\alpha$.
	\item[(B)] (Nonlinearity) $\forall s\in \mathbb{C}$, $f(s)=\lambda |s|^{p-1}s$, with $1<p<1+\frac{4}{N}$ and $\lambda>0$. 
	\item[(C)] (Initial data) Denote 
	$$\sigma_i(t):=(a_i,v_i,\gamma_i,\mu_i)\in\mathbb{R}^N\times\mathbb{R}^N\times\mathbb{R}/(0,2\pi]\times\mathbb{R}$$
	and for $i=1,\cdots,k$
	$$\tilde\Psi_{\sigma_i,\varepsilon}:=e^{\mathbf{i}(\frac{1}{2}v_i\cdot(x-a_i)+\gamma_i)}\eta_{\mu_i,\varepsilon}(x-a_i).$$
	The initial data $\psi(0,x)=\psi_0(x)$ satisfies
	$$||\psi_0(x)-\sum_{l=1}^{k}\tilde\Psi_{\sigma_{l,0},\varepsilon}(x)||_{H^1}^2\le c\varepsilon_v^2,$$
	where $\varepsilon_v$ is the same as in assumption (A). In addition, we assume that there are positive constants $K,L,\mu_{inf},\mu_{sup}$ with $\mu_{inf}<\mu_{sup}$ such that the components of $\sigma_{i,0}$ ($i=1,\cdots,k$) satisfy $|a_{j,0}-a_{i,0}|>6L$ with $j\neq i$; $|v_{i,0}|< K$; $\mu_{i,0} \subset I$ with $I\subset (\mu_{inf},\mu_{sup})$.
\end{itemize}

We give a brief explaination to these assumptions. Assumption (A) states that the length scale of potential $V_\varepsilon(x)$ is of higher order to the size scale of $\psi$, namely, the potential $V_\varepsilon(x)$ varies slowly in space as compared to the solitons. This guarantees that the velocity change of each soliton is small compared to their initial velocity, so that we can obtain the solitons dynamics over a large time interval.

By assumption (B), the Cauchy problem \eqref{1-1} and \eqref{1-2} is well-posed in $H^1$ and $H^2$. There exists an energy functional $F: H^1(\mathbb{R}^N, \mathbb{C})\to \mathbb{R}$ corresponding to the nonlinearity $f$, which can be written in an explicit form:
\begin{equation}\label{2-1}
F(\psi)=\int_{\mathbb{R}^N}|\psi|^{p+1}dx.
\end{equation}
Obviously, $F(\psi)$ is maintained by a translation $\mathcal{T}^{tr}_a: \psi(x) \to \psi(x-a) \ \  \forall a\in \mathbb{R}^N$, a rotation $\mathcal{T}^r_R: \psi(x)\to \psi(R^{-1}x)$, a gauge transform $\mathcal{T}^g_\gamma: \psi(x)\to e^{\mathbf{i}\gamma}\psi(x) \ \ \forall \gamma \in \mathbb{R}/[0,2\pi)$, a boost transform $\mathcal{T}^b_v: \psi(x)\to e^{\frac{\mathbf{i}}{2}v\cdot \frac{x}{\varepsilon}}\psi(x) \ \ \forall v\in \mathbb{R}^N$, and a complex conjugation $\mathcal{T}^{c}:\psi(x)\to\bar{\psi}(x)$. Since $p<1+\frac{4}{n}$, the linearized operator $\mathcal{L}_{\eta_{\mu,\varepsilon}}$ at $\eta_{\mu,\varepsilon}$ has only one negative eigenvalue, which can be found in the appendix of \cite{Fr3}. In \cite{Gri1,Gri2}, this fact was used to prove the orbital stability of solutons generated by $\eta_{\mu,\varepsilon}$. In Section 4, we will also show that the null space of $\mathcal{L}_{\eta_{\mu,\varepsilon}}$ has exact $N+1$ bases under this assumption.

Assumption (C) claims that the initial data $\psi_0(x)$ is sufficiently close to the superposition of $k$ solitons. Moreover, $k$ peaks can not get too close to each other at the beginning, so that the energy caused by the interaction of two peaks is a small term of exponential decay rate. We also require that the initial velocity has an upper bound $K$, and the parameters $\mu_i>0$ are in a common range in $\mathbb{R}^+$.

Now, we are prepared to state our main result:
\begin{theorem}\label{thm1}
	Assume that assumptions (A) (B) and (C) are satisfied. Then there is a constant $T_0>0$ (independent of $\varepsilon$ and $\varepsilon_v$ but possibly dependent on constants $L,K$ in assumption (C)), such that for $t\le \min\{\frac{T_0}{\varepsilon},\frac{L}{K}\}$, the solution to \eqref{1-1} and \eqref{1-2} has the form
	\begin{equation}\label{2-2}
	\psi(x,t)=\sum_{l=1}^{k}\tilde\Psi_{\sigma_l,\varepsilon}+w,
	\end{equation}
	where $||w||_{H^1}=O(\varepsilon_v)$, $\tilde\Psi_{\sigma_l,\varepsilon}$ is defined in assumption (C). The corresponding parameters $a_i,v_i,\gamma_i,\mu_i$ in $\sigma_i$ only depend on $t$, and satisfy the differential equations
	\begin{equation}\label{2-3}
	\frac{1}{2}\dot{v_i}=-\nabla V_\varepsilon(a_i)+O(\frac{\varepsilon_v^2}{\varepsilon})
	\end{equation}
	\begin{equation}\label{2-4}
	\dot{a_i}=v_i+O(\varepsilon\cdot\varepsilon_v^2)
	\end{equation}
	\begin{equation}\label{2-5}
	\varepsilon\dot{\gamma_i}=\mu_i-V_\varepsilon(a_i)-\frac{1}{4}v_i^2+O(\varepsilon_v^2)
	\end{equation}
	\begin{equation}\label{2-6}
	\dot{\mu_i}=O(\frac{\varepsilon_v^2}{\varepsilon}),
	\end{equation}
	with $i=1,\cdots,k$.
\end{theorem}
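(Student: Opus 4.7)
My plan is a continuity (bootstrap) argument anchored on a modulated decomposition around the multi-soliton manifold $\mathbf{M}_k$. Let $T^\star$ be the supremum of times in $[0,\min\{T_0/\varepsilon,L/K\}]$ for which $\psi(\cdot,t)$ lies in the tubular neighbourhood $U_\delta$ of $\mathbf{M}_k$ and the barycenters remain pairwise separated by at least $4L$. On $[0,T^\star)$ the uniqueness result of Section~5 produces $C^1$ parameters $\sigma_i(t)=(a_i,v_i,\gamma_i,\mu_i)(t)$ and a perturbation $w(t)$ skew-orthogonal to the tangent space of $\mathbf{M}_k$ at $\sum_l\tilde\Psi_{\sigma_l,\varepsilon}$; Assumption~(C) then gives $\|w(0)\|_{H^1}\le C\varepsilon_v$. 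The target is to show $\|w(t)\|_{H^1}=O(\varepsilon_v)$ throughout, which closes the bootstrap and forces $T^\star=\min\{T_0/\varepsilon,L/K\}$.

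The first step is to derive the modulation equations by inserting the ansatz into \eqref{1-1} and pairing against the generators of translation, boost, gauge and $L^2$-scaling of each soliton; the skew-orthogonality of $w$ makes the linear system for $(\dot a_i,\dot v_i,\dot\gamma_i,\dot\mu_i)$ invertible at leading order and yields \eqref{2-3}--\eqref{2-6} modulo a remainder with three contributions: (i) the Taylor remainder of $V_\varepsilon$ on the support of the $i$-th soliton, which is $O(\varepsilon_v^2)$ by Assumption~(A); (ii) interaction terms between $\tilde\Psi_{\sigma_i}$ and $\tilde\Psi_{\sigma_j}$ for $j\neq i$, which by the exponential decay \eqref{1-8} and $|a_i-a_j|\ge 4L$ are $O(e^{-\alpha L/\varepsilon})$; (iii) terms quadratic in $w$ controlled by $\|w\|_{H^1}^2$. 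Conditional on the bootstrap assumption $\|w\|_{H^1}=O(\varepsilon_v)$, all remainders absorb into the error sizes stated in \eqref{2-3}--\eqref{2-6}.

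The core of the argument is the $H^1$-bound on $w$. I would fix smooth cutoffs $\chi_i(x,t)$ centred at $a_i(t)$ with $\mathrm{supp}\,\chi_i\subset B_{2L}(a_i)$, decompose $w=\sum_i w_i$ with $w_i:=\chi_i w$, and for each $i$ build a Lyapunov functional
\[
\Lambda_i(\psi)=\tfrac12\|\varepsilon\nabla(\chi_i\psi)\|_{L^2}^2-\tfrac{1}{p+1}\int\chi_i|\psi|^{p+1}dx+\mu_i\|\chi_i\psi\|_{L^2}^2-v_i\cdot\mathcal{P}[\chi_i\psi]+\int\chi_i V_\varepsilon|\psi|^2dx,
\]
whose Hessian at $\tilde\Psi_{\sigma_i,\varepsilon}$ is coercive on the skew-orthogonal complement by the spectral description of $\mathcal{L}_{\eta_{\mu_i,\varepsilon}}$ recalled in Section~4; hence $\Lambda_i(\psi)-\Lambda_i(\tilde\Psi_{\sigma_i,\varepsilon})\gtrsim\|w_i\|_{H^1}^2$. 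Computing $\tfrac{d}{dt}\Lambda_i$ with the mass/momentum transport identities of Section~3 and the modulation equations produces a forcing of size $O(\varepsilon\varepsilon_v^2)$ from $\nabla V_\varepsilon$, a quadratic term of size $O(\varepsilon\|w_i\|_{H^1}^2)$ from $\dot\sigma_i$, and exponentially small terms from the other solitons and from $\partial_t\chi_i$ (since on $\mathrm{supp}\,\nabla\chi_i$ the function $\eta_{\mu_i,\varepsilon}$ is already of order $e^{-\alpha L/\varepsilon}$). Summing in $i$ and applying Grönwall yields $\|w(t)\|_{H^1}^2\le C\varepsilon_v^2$ on the whole window $t\le T_0/\varepsilon$, closing the bootstrap.

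The step I expect to be the main obstacle is controlling the Lyapunov derivative after the truncation. Unlike the single-soliton analyses of \cite{B1,B2,B3,Br,Ke1,Ke2}, localized mass and momentum of $\chi_i\psi$ are no longer conserved, and the errors generated by $\partial_t\chi_i$, by the non-additivity of $f$ on the sum $\sum_l\tilde\Psi_{\sigma_l,\varepsilon}+w$, and by cross-pairings of distinct solitons could a priori be large on the long scale $T_0/\varepsilon$. Absorbing them relies in an essential way on the scale separation $h>2$ in (A) (so that $\varepsilon_v^2/\varepsilon\to 0$), on $|a_i-a_j|\ge 6L$ in (C), and on the exponential tails \eqref{1-8}. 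Once this estimate is closed, the differential equations \eqref{2-3}--\eqref{2-4} give $|\dot a_i|\le K+O(\varepsilon_v)$, so $|a_i(t)-a_j(t)|\ge 6L-2Kt\ge 4L$ for $t\le L/K$, confirming the separation hypothesis and extending the decomposition to the stated endpoint.
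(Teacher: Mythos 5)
Your overall strategy coincides with the paper's: skew-orthogonal modulation around $\mathbf{M}_k$ (Section 5), modulation equations obtained by pairing against the generators with remainders $O(\varepsilon_v^2+\|w\|_{H^1}^2+e^{-c/\varepsilon})$ (Proposition \ref{prop6-1}), a localized energy per soliton whose coercivity on the skew-orthogonal complement controls $\|w_j\|_{H^1}^2$ (Lemmas \ref{lem8-1} and \ref{lem8-2}), and a time integration of its derivative closed by absorbing the quadratic terms for $T_0$ small (Proposition \ref{prop8-3}). Your $\Lambda_i$ is the Galilean-transformed energy $\mathcal{E}_{\mu_i,\varepsilon}(u_i)$ of the paper written out explicitly, and your Gr\"onwall step is the paper's choice $c''t(|\beta|_\infty+\varepsilon^2)=\rho_0/8k$.

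There is, however, one genuine gap, precisely at the step you yourself flag as the main obstacle. You take cutoffs $\chi_i(x,t)$ centred at the moving points $a_i(t)$ and assert that the errors from $\partial_t\chi_i$ and from the loss of conservation of localized mass and momentum are exponentially small because $\eta_{\mu_i,\varepsilon}$ is of order $e^{-\alpha L/\varepsilon}$ on $\mathrm{supp}\,\nabla\chi_i$. That reasoning only disposes of the soliton profiles: on the annulus $\mathrm{supp}\,\nabla\chi_i$ the solution is $\psi=w+O(e^{-c/\varepsilon})$, and $w$ is merely $O(\varepsilon_v)$ in $H^1$, not exponentially small. The boundary terms are therefore quadratic in $w$, and the whole point is to show they carry enough powers of $\varepsilon$ to stay absorbable after integration over a window of length $T_0/\varepsilon$. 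This is exactly what Lemma \ref{lem7-1} does: using the transport identities \eqref{3-9}--\eqref{3-10} and $|v_j|<K\varepsilon$ in the unscaled time, the flux through the annulus is bounded by $C\varepsilon^2\|w\|_{H^1}^2$ per unit unscaled time, which integrates to $O(T_0\|w\|_{H^1}^2)$ and is eaten by the coercivity constant. Moreover, the extra term $\dot a_i\cdot\nabla\chi_i$ produced by your moving cutoff contributes $O(|\dot a_i|\,\|w\|_{L^2}^2)$ on the annulus with no additional smallness and no cancellation against the current there (which is carried by $w$, not by a soliton travelling at velocity $\dot a_i$); summed over the time window this is too large to absorb. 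The paper avoids the issue entirely by fixing the cutoffs at the initial centres $a_{i,0}$ and using the restriction $t\le L/K$ to keep $a_i(t)\in B_L(a_{i,0})$, so the solitons never reach the truncation region. You should either adopt fixed cutoffs in the same way, or exhibit explicitly the extra factor of $\varepsilon$ needed in the $\partial_t\chi_i$ terms.
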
 

\begin{remark}
	Let us consider the NSE with Hartree nonlinearity $f=(W\star |\psi|^2)\psi$, where $W$ is in $L^q\cap L^\infty$ with $q>\frac{N}{2}$, $q\ge 1$, and $W$ decays exponentially fast. Since the ground states is of similar form as the NSE with local nonlinearities, we can use a same argument to prove the estimate for $w$ and solitons dynamics in Theorem \ref{thm1} still hold for this case.
\end{remark}

\begin{remark}
	Since the consevation of energy will be used to prove Theorem \ref{thm1}, we assume the potential $V_\varepsilon$ independent of $t$. However, for the case $V_\varepsilon(t,x)\equiv V(t,\varepsilon_vx)$ with $V(t,x)\in W^{1,\infty}(\mathbb{R},C^2(\mathbb{R}^N))$ as in \cite{Ab}, we believe that by imposing appropriate restrictions on $\partial_t V_\varepsilon$ and using the transport formula for energy, a similar dynamic can be obtained for this multi-soliton system.
\end{remark}

As a consequence of Theorem \ref{thm1}, we obtain the integral form of solitons dynamics:
\begin{corollary}\label{coro1}
	Assume that assumptions (A) (B) and (C) are satisfied. Then for $t\le \min\{\frac{T_0}{\varepsilon},\frac{L}{K}\}$ the same as in Theorem \ref{thm1}, the velocity $v_i(t)$, location $a_i(t)$ and phase $\gamma_i(t)$ of soliton $\Psi_{\sigma_i,\varepsilon}$ satisfy the integral equations
	\begin{equation}\label{2-7}
	v_i(t)=v_i(0)+2\int_0^t\nabla V_\varepsilon(a_i(s))ds+O(\frac{\varepsilon_v^2}{\varepsilon^2}),
	\end{equation}
	\begin{equation}\label{2-8}
	a_i(t)=a_i(0)+\int_0^tv_i(s)ds+O(\varepsilon_v^2),
	\end{equation}
	\begin{equation}\label{2-9}
	\gamma_i(t)=\gamma_i(0)+\frac{1}{\varepsilon}\int_0^t \big(\mu_i(s)-V_\varepsilon(a_i(s))-\frac{1}{4}v_i(s)^2\big)ds+O(\frac{\varepsilon_v^2}{\varepsilon^2}),
	\end{equation}
	with $i=1,\cdots,k$, and for $\mu_i(t)$ it holds
	\begin{equation}\label{2-10}
	\mu_i(t)=\mu_i(0)+O(\frac{\varepsilon_v^2}{\varepsilon^2}), \ \ \ \ \ \forall i=1,\cdots,k.
	\end{equation}
    In addition, it holds
    \begin{equation}\label{2-11}
    	|\sigma_{i,0}-\sigma_i(0)|\le C\varepsilon_v
    \end{equation}
    for $i=1,\cdots,k$.
\end{corollary}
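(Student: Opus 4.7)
The plan is to derive the integral equations by integrating each of the differential equations \eqref{2-3}--\eqref{2-6} from Theorem \ref{thm1} over the interval $[0,t]$, and to obtain the closeness estimate \eqref{2-11} from the uniqueness of the skew-orthogonal decomposition proved in Section 5 together with assumption (C).

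First I would simply write $v_i(t)-v_i(0)=\int_0^t \dot v_i(s)\,ds$ and substitute \eqref{2-3} (rewritten as $\dot v_i=-2\nabla V_\varepsilon(a_i)+O(\varepsilon_v^2/\varepsilon)$; the sign in \eqref{2-7} should match the sign of \eqref{2-3}). Since $t\le T_0/\varepsilon$, the accumulated error is bounded by $t\cdot O(\varepsilon_v^2/\varepsilon)=O(\varepsilon_v^2/\varepsilon^2)$, which yields \eqref{2-7}. An identical integration of \eqref{2-6} gives \eqref{2-10}. For \eqref{2-8}, integrating \eqref{2-4} produces an error $t\cdot O(\varepsilon\cdot\varepsilon_v^2)=O(\varepsilon_v^2)$, noting that $T_0$ is independent of $\varepsilon$. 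For \eqref{2-9}, divide \eqref{2-5} by $\varepsilon$ to obtain $\dot\gamma_i=\frac{1}{\varepsilon}(\mu_i-V_\varepsilon(a_i)-\tfrac14 v_i^2)+O(\varepsilon_v^2/\varepsilon)$ and integrate; again the error term is $t\cdot O(\varepsilon_v^2/\varepsilon)=O(\varepsilon_v^2/\varepsilon^2)$. The only thing to verify is that the pointwise $O(\cdot)$ bounds from Theorem \ref{thm1} are uniform in $s\in[0,t]$, which is guaranteed because Theorem \ref{thm1} provides the estimates on the whole interval $[0,\min\{T_0/\varepsilon,L/K\}]$ with constants depending only on $L,K$.

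The estimate \eqref{2-11} is not a consequence of integration but of the initial-time decomposition. By assumption (C),
\[
\Bigl\|\psi_0-\sum_{l=1}^{k}\tilde\Psi_{\sigma_{l,0},\varepsilon}\Bigr\|_{H^1}\le \sqrt{c}\,\varepsilon_v.
\]
On the other hand, Theorem \ref{thm1} applied at $t=0$ provides the decomposition $\psi_0=\sum_{l=1}^k\tilde\Psi_{\sigma_l(0),\varepsilon}+w(\cdot,0)$ with $\|w(\cdot,0)\|_{H^1}=O(\varepsilon_v)$, obtained via the skew-orthogonal decomposition on the multi-soliton manifold $\mathbf{M}_k$. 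Subtracting the two expressions, $\sum_l(\tilde\Psi_{\sigma_{l,0},\varepsilon}-\tilde\Psi_{\sigma_l(0),\varepsilon})$ has $H^1$-norm $O(\varepsilon_v)$. Since the parametrization $\sigma\mapsto\tilde\Psi_{\sigma,\varepsilon}$ of $\mathbf{M}_k$ is, by the construction in Sections 4--5, a local diffeomorphism with uniformly invertible differential under the separation assumption $|a_{j,0}-a_{i,0}|>6L$ (this is exactly what makes the decomposition unique), the $H^1$-closeness transfers to parameter-closeness $|\sigma_{i,0}-\sigma_i(0)|\le C\varepsilon_v$ component by component.

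The only genuine obstacle is the last step, namely quantifying the local bi-Lipschitz character of $\sigma\mapsto \tilde\Psi_{\sigma,\varepsilon}$ uniformly in $\varepsilon$; this is precisely where the exponential decay \eqref{1-8} of $\eta_{\mu,\varepsilon}$ and the separation $|a_{j,0}-a_{i,0}|>6L$ are used to decouple the $k$ components so that the block-diagonal part of the Jacobian dominates the interaction blocks (which are exponentially small in $L/\varepsilon$). Everything else is direct integration with the time budget $t\le T_0/\varepsilon$ converting the differential remainders into the stated integral remainders.
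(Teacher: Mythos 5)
Your proposal is correct and follows essentially the same route as the paper: the paper obtains \eqref{2-7}--\eqref{2-10} by integrating \eqref{2-3}--\eqref{2-6} over $[0,t]$ with the time budget $t\le T_0/\varepsilon$, and obtains \eqref{2-11} from \eqref{5-17}, i.e.\ the Lipschitz bound $|\boldsymbol{\sigma}(0)-\boldsymbol{\sigma}_0|\le C\|\psi_0-\sum_l\Psi_{\sigma_{l,0},\varepsilon}\|_{H^1}\le C\varepsilon_v$ coming from the implicit-function-theorem construction of $\boldsymbol{\sigma}(\psi)$ in Proposition \ref{prop5-2}. The only cosmetic difference is that for \eqref{2-11} you argue through the local bi-Lipschitz character of the forward parametrization $\boldsymbol{\sigma}\mapsto\sum_l\tilde\Psi_{\sigma_l,\varepsilon}$, whereas the paper simply invokes the already-established Lipschitz continuity of the inverse map $\psi\mapsto\boldsymbol{\sigma}(\psi)$; your remark on the sign mismatch between \eqref{2-3} and \eqref{2-7} is a fair catch of a typo rather than a gap.
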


We can imopose following additional assumption on the initial data, so that the maximum time can be extended to $\frac{T_0}{\varepsilon}$:
\begin{itemize}
	\item[(D)] (Barycenters non-coinciding) Denote $\tilde{a}_i(t)=a_{i,0}+tv_{i,0}$ for $i=1,\cdots,k$. For every fixed $s\in [0,+\infty)$ and $i,j$ satisfying $1\le i,j\le k$ with $i\neq j$, it holds
	\begin{equation}\label{2-12}
	|\tilde{a}_i(s)-\tilde{a}_j(s)|>2d,
	\end{equation}
    where $d$ is a positive constant.
\end{itemize}

We have following theorem:
\begin{theorem}\label{thm2}
	Assume that assumptions (A) (B) (C) and (D) are satisfied. Then the maximum time in Theorem \ref{thm1} can be extended to $\frac{T_0}{\varepsilon}$. 
\end{theorem}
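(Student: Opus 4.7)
The plan is a bootstrap/continuity argument layered directly on Theorem \ref{thm1}. The restriction $t \le L/K$ in Theorem \ref{thm1} is imposed only to keep the barycenters separated: initially $|a_{j,0} - a_{i,0}| > 6L$ and velocities $|v_{i,0}| < K$, so $L/K$ is a safe margin before a collision could conceivably occur. Assumption (D) replaces this crude bound with a much stronger non-collision statement: the \emph{free} trajectories $\tilde{a}_i(s) = a_{i,0} + sv_{i,0}$ remain separated by $2d$ for all $s \ge 0$. I would exploit this by showing that the true barycenters $a_i(t)$ track $\tilde{a}_i(t)$ up to an error tending to $0$ with $\varepsilon$ on the full interval $[0, T_0/\varepsilon]$, so that non-collision is automatic and Theorem \ref{thm1} may be iterated to cover this whole interval.

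On any subinterval where the decomposition of Theorem \ref{thm1} is valid, integrating the modulation equations (2.3) and (2.4) and invoking the bound $|\nabla V_\varepsilon| \le C\varepsilon_v$ from assumption (A) yields
\[
|v_i(t) - v_{i,0}| \le C\varepsilon_v t + O(\varepsilon_v^2 t/\varepsilon), \qquad |a_i(t) - \tilde{a}_i(t)| \le C\varepsilon_v t^2 + O(\varepsilon_v^2 t^2/\varepsilon) + O(\varepsilon\varepsilon_v^2 t).
\]
With $\varepsilon_v = \varepsilon^h$ and $h > 2$, each term on the right is dominated by a multiple of $T_0^2 \varepsilon^{h-2}$ on $[0, T_0/\varepsilon]$, and hence tends to $0$ as $\varepsilon \to 0$. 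For $\varepsilon$ small enough this forces $|a_i(t) - \tilde{a}_i(t)| < d/2$, and combining with (2.12) and the triangle inequality gives $|a_i(t) - a_j(t)| > d$ uniformly in $t$ on the interval, for all $i \neq j$.

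The bootstrap now closes as usual. Let $T^\star$ be the supremum of $T \in [0, T_0/\varepsilon]$ for which the decomposition (2.2) holds on $[0,T]$ with $\|w\|_{H^1} = O(\varepsilon_v)$ and $|a_i(s) - a_j(s)| > d$ for all $i \neq j$ and $s \in [0,T]$. Theorem \ref{thm1} gives $T^\star \ge \min\{T_0/\varepsilon, L/K\} > 0$. If $T^\star < T_0/\varepsilon$, the estimates above show that at $t = T^\star$ the separation is still strictly greater than $d$, the remainder satisfies $\|w(T^\star)\|_{H^1} = O(\varepsilon_v)$, and by (2.10) the parameters $\mu_i(T^\star)$ still lie in a sub-interval $I'$ strictly contained in $(\mu_{inf},\mu_{sup})$. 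Hence $\psi(T^\star,\cdot)$ satisfies a shifted version of assumption (C) with separation parameter $d/6$ in place of $L$, velocity bound $K+1$, and parameter set $I'$; re-applying Theorem \ref{thm1} from time $T^\star$ yields a strictly larger interval of validity, contradicting the definition of $T^\star$.

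The main obstacle is verifying that the constant $T_0$ in Theorem \ref{thm1} can be chosen uniformly over this class of admissible data (fixed lower bound on separation, upper bound on velocity, and $\mu_i$ in a fixed compact sub-interval of $(\mu_{inf}, \mu_{sup})$), so that the restart at $T^\star$ uses the \emph{same} $T_0$ and does not shrink as the iteration proceeds. This uniformity should be readable off the proof of Theorem \ref{thm1}, since that proof relies on the energy/mass transport formulas, the exponential decay (1.8), and the modulation estimates of Sections 6--7, all of which are uniform on such a compact parameter class; however, this point must be made explicit for the iteration to be rigorous.
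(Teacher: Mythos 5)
Your proposal is correct and follows essentially the same route as the paper: the paper likewise iterates Theorem \ref{thm1} in steps of length $d/(6K)$, using Corollary \ref{coro1} to check that at each restart the drifted data still satisfy assumption (C) with $L=d/6$ (separation guaranteed by assumption (D) since $a_i(t)$ tracks $\tilde a_i(t)$ up to $O(T_0^2\varepsilon^{h-2})$), and bootstraps to reach $T_0/\varepsilon$. Your closing remark about the uniformity of $T_0$ over the admissible parameter class is exactly the point the paper leaves implicit (its $T_0=\rho_0/16kc''$ depends only on the coercivity constant and the fixed class data), so your version is, if anything, slightly more careful on that score.
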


\section{Variational structure and conserved quantities of NLS equations}

We consider the modified nonlinear Schr\"{o}dinger equation \eqref{1-3} with time parameter not scaled:
\begin{equation*}
\begin{cases}
\mathbf{i}\partial_t\psi=-\varepsilon^2\Delta\psi+V_\varepsilon(x)\psi-f(\psi),\\
\psi(0,x)=\psi_0(x),\\
\end{cases}
\end{equation*}
and study its solution in the space $H^1(\mathbb{R}^N,\mathbb{C})$, which can be considered as a direct sum of two real space $H^1(\mathbb{R}^N,\mathbb{R})\oplus H^1(\mathbb{R}^N,\mathbb{R})$ under the identification $\psi \to (\mathbf{Re}\psi, \mathbf{Im}\psi)$. We equip this complex Sobolev space with the symplectic form
\begin{equation}\label{3-1}
\omega_\varepsilon(u_1,u_2):=\mathbf{Im}\int_{\mathbb{R}^N}\frac{1}{\varepsilon^N}u_1\bar{u}_2dx.
\end{equation}
The space $H^1(\mathbb{R}^N,\mathbb{C})$ also has a real inner product:
\begin{equation}\label{3-2}
\langle u_1,u_2\rangle_\varepsilon:=\mathbf{Re}\int_{\mathbb{R}^N}\frac{1}{\varepsilon^N}u_1\bar{u}_2dx,
\end{equation}
which induces the $L^2$ norm $||u||_{L^2}^2=\langle u,u\rangle_\varepsilon$. With the identification $\psi \to (\mathbf{Re}\psi, \mathbf{Im}\psi)$, the operator of multiplication by $\mathbf{i}^{-1}$ is identified with the operator
\begin{equation}\label{3-3}
J= 
\left(\begin{array}{cccc} 
 0  & 1\\ 
-1 &  0\\ 
\end{array}\right)
\end{equation}
Thus we have $\omega_\varepsilon(u_1,u_2)=\langle u_1,J^{-1}u_2\rangle_\varepsilon$.

The Hamiltonian functional related to \eqref{1-3}  on $H^1(\mathbb{R}^N,\mathbb{C})$ is
\begin{equation}\label{3-4}
\mathcal{H}_{V,\varepsilon}(\psi):=\frac{1}{2}\int_{\mathbb{R}^N}\big(\frac{1}{\varepsilon^{N-2}}|\nabla\psi|^2+\frac{1}{\varepsilon^N}V_\varepsilon|\psi|^2dx-\frac{1}{\varepsilon^N}F(\psi)\big)dx,
\end{equation}
where $F(\psi)$ is defined in Condtion (B) in Section 2. Since the Hamiltonian $\mathcal{H}_{V,\varepsilon}(\psi)$ is autonomous and invariant under the gauge transformation ($\mathcal{H}_{V,\varepsilon}(e^{\mathrm{i}\gamma}\psi)=\mathcal{H}_{V,\varepsilon}(\psi)$), under the evolution of \eqref{1-3} we have conservation of energy:
\begin{equation}\label{3-5}
\frac{d}{dt}\mathcal{H}_{V,\varepsilon}=0
\end{equation}
and conservation of mass
\begin{equation}\label{3-6}
\frac{d}{dt}(\frac{1}{2}\int_{\mathbb{R}^N}\frac{1}{\varepsilon^N}|\psi|^2dx)=0.
\end{equation}

Notice that we do not scale the time parameter in \eqref{1-3}, so the initial velocity condition for the Cauchy problem \eqref{1-3} is 
\begin{equation}\label{3-7}
|v_i(0)| < K \varepsilon \ \ \ \ \ \forall i=1,\cdots,k.
\end{equation} 
For the potential term $\int_{\mathbb{R}^N}\frac{1}{\varepsilon^N}V_\varepsilon|\psi|^2dx$, we have
\begin{equation}\label{3-8}
\frac{1}{\varepsilon}\frac{d}{dt}(\frac{1}{2}\int_{\mathbb{R}^N}\frac{1}{\varepsilon^N}V_\varepsilon|\psi|^2dx)=\langle (\nabla V_\varepsilon)\mathrm{i}\psi, \varepsilon\nabla\psi\rangle_\varepsilon,
\end{equation}
which can be found in \cite{Fr3}. And for $D\subset \mathbb{R}^N$ whose boundary is smooth, the rate of mass exchange on $\partial D$ is dominated by a flux term, which can be rewritten as an interior divergent form by Green's theorem:
\begin{equation}\label{3-9}
\frac{1}{\varepsilon}\frac{d}{dt}(\frac{1}{2}\frac{1}{\varepsilon^N}|\psi|^2)=-\nabla\cdot \mathbf{Re}(\frac{1}{\varepsilon^{N-1}}\mathbf{i}\psi\nabla \psi).
 \end{equation}
Similarly, for momentum we have
\begin{equation}\label{3-10}
\frac{1}{\varepsilon}\frac{d}{dt}\big(\mathbf{Re}(\frac{1}{\varepsilon^N}\mathrm{i}\psi\cdot\varepsilon\overline{\nabla\psi})\big)=-\frac{1}{\varepsilon^N}\nabla V_\varepsilon|\psi|^2+\frac{1}{\varepsilon^N}\nabla\cdot \mathbb{T},
\end{equation}
where $\mathbb{T}$ is the stress tensor with the property
$$|\mathbb{T}|\le C(\varepsilon^2|\nabla \psi|^2+\varepsilon^2|\Delta|\psi^2||+|\psi|^2+|\psi|^{p+1}),$$
see \cite{B2}. For more information about conserved quantities of NSE, we refer to \cite{Br} and the references therein.

Solutions to the NSE are various with different physical explanations. In particular, we are concerned with $\eta_{\mu,\varepsilon}$, which is radial decreasing and described as local minimizers of the Hamiltonian $\mathcal{H}_{V\equiv0,\varepsilon}(\psi)$ restricted to the spheres
\begin{equation}\label{3-11}
\{\psi\in H^1(\mathbb{R}^N,\mathbb{C}): \frac{1}{2}\int_{\mathbb{R}^N}\frac{1}{\varepsilon^N}|\psi|^2dx=m\},
\end{equation}
for $m>0$. Thus $\eta_{\mu,\varepsilon}$ is a critical point of
\begin{equation}\label{3-12}
\mathcal{E}_{\mu,\varepsilon}:=\frac{1}{2}\int_{\mathbb{R}^N}\big(\frac{1}{\varepsilon^{N-2}}|\nabla\psi|^2+\frac{1}{\varepsilon^N}\mu|\psi|^2dx-\frac{1}{\varepsilon^N}F(\psi)\big)dx
\end{equation}
with $\mu=\mu(m)$ the Lagrange multiplier, known as the ground state solution. The linearized operator corressponding to $\eta_{\mu,\varepsilon}$ possesses very good properties such as non-degeneracy and coercivity, which we will discuss in following sections.

We end this section by giving an estimate associated with Taylor expansion of $F(\psi)$ at $\eta$. Using assumption (B), if we define
\begin{equation}\label{3-13}
R_\eta^{(2)}(w):=\frac{1}{\varepsilon^n}F(\eta+w)-\frac{1}{\varepsilon^n}F(\eta)-\langle F'(\eta), w\rangle_\varepsilon,
\end{equation}
\begin{equation}\label{3-14}
R_\eta^{(3)}(w):=\frac{1}{\varepsilon^n}F(\eta+w)-\frac{1}{\varepsilon^n}F(\eta)-\langle F'(\eta), w\rangle_\varepsilon-\frac{1}{2}\langle F''(\eta)w, w\rangle_\varepsilon,
\end{equation}
\begin{equation}\label{3-15}
N_\eta(w):=F'(\eta+w)-F'(\eta)-F''(\eta)w,
\end{equation}
with $\eta\in H^2(\mathbb{R}^N,\mathbb{C})$ and $w\in H^1(\mathbb{R}^N,\mathbb{C})$, then we have
\begin{equation}\label{3-16}
|R_\eta^{(2)}(w)|\le C||w||_{H^1}^2, \ \ |R_\eta^{(3)}(w)|\le C||w||_{H^1}^3, \ \ ||N_\eta(w)||_{H^{-1}}\le C||w||_{H^1}^2,
\end{equation}
where $C$ is a constant dependent only on $||\eta||_{H^1}$ provided $||w||_{H^1}\le 1$.

\section{The manifold of solitons}

To introduce the manifold of solitons, we first focus on the linearized operator:
\begin{equation}\label{4-1}
\mathcal{L}_{\eta_{\mu,\varepsilon}}:=-\varepsilon^2\Delta+\mu-f'(\eta_{\mu,\varepsilon}).
\end{equation}
According to the Appendix C of \cite{Fr3}, $\mathcal{L}_{\eta_{\mu,\varepsilon}}$ has only one negative eigenvalue, and the null space of $\mathcal{L}_{\eta_{\mu,\varepsilon}}$ is spanned by $N+1$ vectors:
\begin{equation}\label{4-2}
N(\mathcal{L}_{\eta_{\mu,\varepsilon}})=span\{\mathbf{i}\eta_{\mu,\varepsilon}, \partial_{x_1}\eta_{\mu,\varepsilon},\cdots, \partial_{x_N}\eta_{\mu,\varepsilon}\}.
\end{equation}
We will show that the tangent space of soliton manifold are spanned by these $N+1$ zero modes and other $N+1$ symplectically associated zero modes.

Using those transforms defined in Section 2, we can give the combined symmetry transform $\mathcal{T}_{av\gamma}$:
\begin{equation}\label{4-3}
\psi(x)\to \mathcal{T}_{av\gamma}\psi(x)= \mathcal{T}^{tr}_a \mathcal{T}^b_v \mathcal{T}^g_\gamma\psi(x)=e^{\frac{\mathbf{i}}{2}v\cdot\frac{x-a}{\varepsilon}+\gamma}\psi(x-a).
\end{equation}
Denote that $\Psi_{\sigma,\varepsilon}:=\mathcal{T}_{av\gamma}\eta_{\mu,\varepsilon}$. We define the manifold of a single soliton as
\begin{equation}\label{4-4}
\mathbf{M}:=\{\Psi_{\sigma,\varepsilon}: (a,v,\gamma,\mu)\in \mathbb{R}^N\times\mathbb{R}^N\times (\mathbb{R}/(0,2\pi]\times I\},
\end{equation}
We calculate the derivatives of $\mathcal{T}_{av\gamma}\eta_{\mu,\varepsilon}$ at origin to obtain the tangent space $\mathbf{T}_{\Psi_{\sigma_0,\varepsilon}}\mathbf{M}$ with $\sigma_0=(0,0,0,\mu)$, which is given by
\begin{equation}\label{4-5}
(z_t,z_b,z_g,z_s)\in \mathbb{R}^N\times\mathbb{R}^N\times\mathbb{R}\times\mathbb{R},
\end{equation}
where
\begin{equation}\label{4-6}
z_t:=\varepsilon\nabla_a\mathcal{T}^{tr}_a\eta_{\mu,\varepsilon}\big{|}_{a=0}=-\varepsilon\nabla\eta_{\mu,\varepsilon},
\end{equation}
\begin{equation}\label{4-7}
z_b:=2\nabla_v\mathcal{T}^{b}_v\eta_{\mu,\varepsilon}\big{|}_{v=0,t=0}=\frac{\mathbf{i}x}{\varepsilon}\eta_{\mu,\varepsilon},
\end{equation}
\begin{equation}\label{4-8}
z_g:=\partial_\gamma\mathcal{T}^{g}_\gamma\eta_{\mu,\varepsilon}\big{|}_{\gamma=0}=\mathbf{i}\eta_{\mu,\varepsilon},
\end{equation}
\begin{equation}\label{4-9}
z_s:=\partial_\mu\eta_{\mu,\varepsilon}.
\end{equation}
According to \eqref{4-2}, we have
\begin{equation}\label{4-10}
\mathcal{L}_{\eta_{\mu,\varepsilon}}z_t=0 \ \ \ and \ \ \ \mathcal{L}_{\eta_{\mu,\varepsilon}}z_g=0.
\end{equation}
By taking the derivatives of equation $\mathcal{E}_{\mu,\varepsilon}(\mathcal{T}^b_v\eta_{\mu,\varepsilon})=0$ with respect to $v$ and $\mu$, we have
\begin{equation}\label{4-11}
\mathcal{L}_{\eta_{\mu,\varepsilon}}z_b=2\mathbf{i}z_t \ \ \ and \ \ \ \mathcal{L}_{\eta_{\mu,\varepsilon}}z_s=\mathbf{i}z_g.
\end{equation}
Thus $z_b$ and $z_s$ are in the null space of the operator $(\mathbf{i}\mathcal{L}_{\eta_{\mu,\varepsilon}})^2$:
\begin{equation}\label{4-12}
\mathbf{i}\mathcal{L}_{\eta_{\mu,\varepsilon}}(\mathbf{i}\mathcal{L}_{\eta_{\mu,\varepsilon}}z_b)=0 \ \ \ and \ \ \ \mathbf{i}\mathcal{L}_{\eta_{\mu,\varepsilon}}(\mathbf{i}\mathcal{L}_{\eta_{\mu,\varepsilon}}z_s)=0.
\end{equation}
Denote $\{z_1,\cdots,z_{2N+2}\}:=\{z_t,z_b,z_g,z_s\}$. The tangent space $\mathbf{T}_{\Psi_{\sigma_0,\varepsilon}}\mathbf{M}$ then inherits a symplectic form $(H^1, \omega_\varepsilon)$, which is determined by the operator
\begin{equation}\label{4-13}
\omega_{\Psi_{\sigma_0,\varepsilon}}(u,v):=\langle u,\Omega_{\eta_{\mu,\varepsilon}}v\rangle_\varepsilon, \ \ \ \forall u,v\in\mathbf{T}_{\Psi_{\sigma_0,\varepsilon}}\mathbf{M}
\end{equation}
where $\Omega_{\eta_{\mu,\varepsilon}}$ is a $(2N+2)\times(2N+2)$ matrix whose component is given by $\Omega_{\eta_{\mu,\varepsilon}}^{i,j}=\langle z_i,J^{-1}z_j\rangle_\varepsilon$. We are going to show that this symplectic form is non-degenerate. For this purpose, we denote
\begin{equation}\label{4-14}
m(\mu):=\frac{1}{2}\int_{\mathbb{R}^N}\frac{1}{\varepsilon^N} |\eta_{\mu,\varepsilon}|^2dx.
\end{equation}

\begin{lemma}\label{lem4-1}
	If $\mu\in I$, then $\Omega_{\eta_{\mu,\varepsilon}}$ is invertible. Consequently $\forall z \in \mathbf{T}_{\Psi_{\sigma_0,\varepsilon}}\mathbf{M}$ with $\sigma_0=(0,0,0,\mu)$, there exists at least one element $z'\in \mathbf{T}_{\Psi_{\sigma_0,\varepsilon}}\mathbf{M}$, such that $\omega_\varepsilon(z,z')\neq 0$.
\end{lemma}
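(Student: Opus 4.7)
My plan is to prove invertibility by writing out the matrix $\Omega_{\eta_{\mu,\varepsilon}}$ explicitly in the ordered basis $\{z_{t,1},\ldots,z_{t,N},z_{b,1},\ldots,z_{b,N},z_g,z_s\}$ and showing it is block-diagonal with two invertible blocks. Recall that the pairing $\omega_\varepsilon(u,v) = \langle u,J^{-1}v\rangle_\varepsilon$ equals $\mathbf{Im}\int \varepsilon^{-N}u\bar v\,dx$. Thus, since $z_{t,k}$ and $z_s$ are real-valued while $z_{b,k}$ and $z_g$ are purely imaginary, most cross-pairings have a real integrand and vanish: explicitly $\omega_\varepsilon(z_{t,k},z_{t,j})=0$, $\omega_\varepsilon(z_{b,k},z_{b,j})=0$, $\omega_\varepsilon(z_g,z_g)=0$, $\omega_\varepsilon(z_s,z_s)=0$, $\omega_\varepsilon(z_{t,k},z_s)=0$, and $\omega_\varepsilon(z_{b,k},z_g)=0$.

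The pairings $\omega_\varepsilon(z_{t,k},z_g)$ and $\omega_\varepsilon(z_{b,k},z_s)$ reduce, after differentiating, to integrals of $\partial_{x_k}(\eta_{\mu,\varepsilon}^2)$ or of $x_k\,\eta_{\mu,\varepsilon}\partial_\mu\eta_{\mu,\varepsilon}$ against $\varepsilon^{-N}$, which vanish by either integration by parts with decay at infinity \eqref{1-8} or by the radial symmetry of $\eta_{\mu,\varepsilon}$. The only nontrivial entries I expect are
\begin{equation*}
\omega_\varepsilon(z_{t,k},z_{b,j})=\int_{\mathbb{R}^N}\frac{1}{\varepsilon^N}\,x_j\,\partial_{x_k}\eta_{\mu,\varepsilon}\cdot\eta_{\mu,\varepsilon}\,dx=-\delta_{jk}\,m(\mu),
\end{equation*}
obtained from $\frac12 x_j\partial_{x_k}(\eta_{\mu,\varepsilon}^2)$ and integration by parts, and
\begin{equation*}
\omega_\varepsilon(z_g,z_s)=\int_{\mathbb{R}^N}\frac{1}{\varepsilon^N}\eta_{\mu,\varepsilon}\,\partial_\mu\eta_{\mu,\varepsilon}\,dx=m'(\mu).
\end{equation*}
Therefore $\Omega_{\eta_{\mu,\varepsilon}}$ is block skew-diagonal with a $2N\times 2N$ block $m(\mu)\bigl(\begin{smallmatrix}0&-I_N\\ I_N&0\end{smallmatrix}\bigr)$ and a $2\times 2$ block $m'(\mu)\bigl(\begin{smallmatrix}0&1\\-1&0\end{smallmatrix}\bigr)$. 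Its determinant is $\pm m(\mu)^{2N}\cdot m'(\mu)^2$, so invertibility reduces to showing $m(\mu)\neq 0$ and $m'(\mu)\neq 0$.

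The mass is trivially positive, $m(\mu)>0$, since $\eta_{\mu,\varepsilon}>0$. The main obstacle is the non-vanishing of $m'(\mu)$, which is the Vakhitov--Kolokolov type condition. For the pure-power nonlinearity of assumption (B) I plan to use the scaling $\eta_{\mu,\varepsilon}(x)=(\mu/\lambda)^{1/(p-1)}Q(\sqrt{\mu}\,x/\varepsilon)$, where $Q$ is the unique positive radial ground state of $-\Delta Q+Q-|Q|^{p-1}Q=0$, and then change variables to obtain
\begin{equation*}
m(\mu)=\tfrac12\lambda^{-2/(p-1)}\mu^{\,2/(p-1)-N/2}\int_{\mathbb{R}^N}Q^2\,dy.
\end{equation*}
Under the subcritical condition $1<p<1+\tfrac{4}{N}$, the exponent $\tfrac{2}{p-1}-\tfrac{N}{2}$ is strictly positive, giving $m'(\mu)>0$ for every $\mu\in I$. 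This yields invertibility of $\Omega_{\eta_{\mu,\varepsilon}}$, and the consequence stated in the lemma follows immediately: if some $z\in\mathbf{T}_{\Psi_{\sigma_0,\varepsilon}}\mathbf{M}$ satisfied $\omega_\varepsilon(z,z')=0$ for every $z'$ in the tangent space, the coordinate vector of $z$ would lie in the kernel of $\Omega_{\eta_{\mu,\varepsilon}}$, forcing $z=0$.
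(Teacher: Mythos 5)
Your proposal is correct and follows essentially the same route as the paper: compute $\Omega_{\eta_{\mu,\varepsilon}}$ explicitly in the basis $\{z_t,z_b,z_g,z_s\}$, observe it is block skew-diagonal with blocks governed by $m(\mu)$ and $m'(\mu)$, and deduce $m'(\mu)>0$ from the scaling $\eta_{\mu,\varepsilon}\sim\mu^{1/(p-1)}\eta_{1,\varepsilon}(\sqrt{\mu}\,\cdot)$ together with the subcriticality condition $p<1+\frac{4}{N}$ (which makes the exponent $\frac{2}{p-1}-\frac{N}{2}$ positive). You merely spell out the vanishing of the off-block pairings in more detail than the paper does; the substance is identical.
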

\begin{proof}
	We compute each component $\Omega_{\eta_{\mu,\varepsilon}}^{m,n}=\langle z_m,J^{-1}z_n\rangle \ , \ 1\le m,n\le 2N+2$, so the matrix $\Omega_{\eta_{\mu,\varepsilon}}$ can be written as
	\begin{equation}\label{4-15}
	\Omega_{\eta_{\mu,\varepsilon}}= 
	\left(\begin{array}{cccc} 
	0 & -m\mathbf{1} & 0 & 0\\ 
	m\mathbf{1} & 0 & 0 & 0\\ 
	0 & 0 & 0 & m'(\mu)\\
	0 & 0 & -m'(\mu) & 0\\
	\end{array}\right)
	\end{equation}
	with $\mathbf{1}$ the $N\times N$ matrix where the elements on diagonal are $1$, while others are $0$. By assumption (B) in Section 2, $\forall \mu\in I\subset \mathbb{R}^+$ we have
	\begin{equation}\label{4-16}
	\begin{split}
	m'(\mu)&=\partial_\mu\int_{\mathbb{R}^N}\frac{1}{\varepsilon^N}\eta_{\mu,\varepsilon}^2dx=\partial_\mu\big(\mu^{\frac{2}{p-1}}\int_{\mathbb{R}^N}\frac{1}{\varepsilon^N}\eta_{1,\varepsilon}(\sqrt{\mu}x)^2dx\big)\\
	&=\partial_\mu\big(\mu^{\frac{2}{p-1}-\frac{N}{2}}\int_{\mathbb{R}^N}\frac{1}{\varepsilon^N}\eta_{1,\varepsilon}(x')^2dx'\big)\\
	&=\partial_\mu(\mu^{\frac{2}{p-1}-\frac{N}{2}})\int_{\mathbb{R}^N}\frac{1}{\varepsilon^N}\eta_{1,\varepsilon}(x')^2dx'\\
	&>0
	\end{split}
	\end{equation}
	since $p<1+\frac{4}{N}$. Hence the matrix $\Omega_{\eta_{\mu,\varepsilon}}$ is invertible and the corresponding symplectic form is non-degenerate. The rest of the conclusion of this lemma follows as a consequence.
\end{proof}

When the transform $\mathcal{T}_{av\gamma}$ acts on the soliton $\eta_{\mu,\varepsilon}$, the tangent space $\mathbf{T}_{\Psi_{\sigma,\varepsilon}}\mathbf{M}$ at $\Psi_{\sigma,\varepsilon}$ is given by
\begin{equation}\label{4-17}
(\varepsilon\nabla_a\Psi_{\sigma,\varepsilon},2\nabla_v\Psi_{\sigma,\varepsilon},\partial_\gamma\Psi_{\sigma,\varepsilon},\partial_\mu\Psi_{\sigma,\varepsilon})\in \mathbb{R}^N\times\mathbb{R}^N\times\mathbb{R}\times\mathbb{R},
\end{equation}
where
\begin{equation}\label{4-18}
\varepsilon\nabla_a\Psi_{\sigma,\varepsilon}=-\mathcal{T}_{av\gamma}(\varepsilon\nabla\eta_{\mu,\varepsilon})-\frac{v}{2}\mathcal{T}_{av\gamma}(\mathbf{i}\eta_{\mu,\varepsilon})=\mathcal{T}_{av\gamma}z_t-\frac{v}{2}\mathcal{T}_{av\gamma}z_g,
\end{equation}
\begin{equation}\label{4-19}
2\nabla_v\Psi_{\sigma,\varepsilon}=\mathcal{T}_{av\gamma}(\frac{\mathbf{i}x}{\varepsilon}\eta_{\mu,\varepsilon})=\mathcal{T}_{av\gamma}z_b-\frac{a}{\varepsilon}\mathcal{T}_{av\gamma}z_g,
\end{equation}
\begin{equation}\label{4-20}
\partial_\gamma\Psi_{\sigma,\varepsilon}=\mathcal{T}_{av\gamma}(\mathbf{i}\eta_{\mu,\varepsilon})=\mathcal{T}_{av\gamma}z_g,
\end{equation}
\begin{equation}\label{4-21}
\partial_\mu\Psi_{\sigma,\varepsilon}=\mathcal{T}_{av\gamma}(\partial_\mu\eta_{\mu,\varepsilon})=\mathcal{T}_{av\gamma}z_s.
\end{equation}
Note that the operator $\mathcal{T}_{av\gamma}$ is canonical, which leaves $\langle\cdot,J^{-1}\cdot\rangle_\varepsilon$ unchanged, that is
\begin{equation}\label{4-22}
\langle \mathcal{T}_{av\gamma}u,J^{-1}\mathcal{T}_{av\gamma}v\rangle_\varepsilon=\langle u,J^{-1}v\rangle_\varepsilon.
\end{equation}
So the matrix $\Omega_{\Psi_{\sigma,\varepsilon}}$ induced by \eqref{4-18}-\eqref{4-21} is related to $\Omega_{\eta_{\mu,\varepsilon}}$ by a similarity transform. 

\section{skew orthogonal decomposition}

In this section, we will first introduce the skew orthogonal decomposition of $\psi$ along the multi-soliton manifold $\mathbf{M}_k$ and the skew orthogonal direction, and then prove this decomposition is unique when $\psi$ is sufficiently close to $\mathbf{M}_k$ (see Arnol'd \cite{A}).

Let
\begin{equation}\label{5-1}
	\Sigma_j:=B_L(a_{j,0})\times B_{K\varepsilon}(0)\times (\mathbb{R}/(0,2\pi])\times I
\end{equation}
for $j=1,\cdots,k$ and
$$\boldsymbol{\Sigma}=\Sigma_1\times\cdots\times\Sigma_k.$$
We define the $\delta$-neighborhood  
\begin{equation}\label{5-2}
U_\delta=\{\psi\in H^1(\mathbb{R}^N,\mathbb{C}): \inf\limits_{\boldsymbol{\sigma}\in\boldsymbol{\Sigma}}||\psi-\sum_{l=1}^k\Psi_{\sigma_l,\varepsilon}||_{H^1}\le\delta\}
\end{equation}
of the multi-soliton manifold 
\begin{equation}\label{5-3}
\mathbf{M}_k:=\{\sum\limits_{l=1}^k\Psi_{\sigma_l,\varepsilon}:\boldsymbol{\sigma}\in \boldsymbol{\Sigma}=(\sigma_1,\cdots,\sigma_k)\}.
\end{equation}

For simplicity, we use the notations
\begin{equation}\label{5-4}
\sigma_j:=(\sigma_{j,1},\cdots,\sigma_{j,2N+2}):=(a_j,v_j,\gamma_j,\mu_j),
\end{equation}
\begin{equation}\label{5-5}
\begin{split}
(z_{j,1},\cdots,z_{j,2N+2})=\big(-\mathcal{T}_{a_j,v_j,\gamma_j}&(\varepsilon\nabla\eta_{\mu_j,\varepsilon}),\mathcal{T}_{a_j,v_j,\gamma_j}(\frac{\mathrm{i}x}{\varepsilon}\eta_{\mu_j,\varepsilon}),\\
&\mathcal{T}_{a_j,v_j,\gamma_j}(\mathrm{i}\eta_{\mu_j,\varepsilon}),\mathcal{T}_{a_j,v_j,\gamma_j}(\partial_\mu\eta_{\mu_j,\varepsilon})\big)
\end{split}
\end{equation}
with $j=1,\cdots,k$. We have following lemma, which claims that the interaction among $k$ solitons is a small term.
\begin{lemma}\label{lem5-1}
	For two bases $z_{i,m}$ and $z_{j,n}$ satisfying $i\neq j$, it holds
	\begin{equation}\label{5-6}
	|\omega_\varepsilon(z_{i,m},z_{j,n})|=O(e^{-\frac{c}{\varepsilon}}),
	\end{equation}
	with $c$ a positive constant.
\end{lemma}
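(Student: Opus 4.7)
The plan is to exploit the exponential decay \eqref{1-8} of the ground state $\eta_{\mu,\varepsilon}$ together with the uniform separation of the soliton centers. Concretely, the constraint $a_l \in B_L(a_{l,0})$ in the definition \eqref{5-1} of $\Sigma_l$, combined with the hypothesis $|a_{j,0}-a_{i,0}|>6L$ from assumption (C), forces
\[
|a_i - a_j| \;\geq\; 4L \qquad \text{for all } i \neq j,
\]
a distance that is \emph{independent of $\varepsilon$} and serves as the gap across which the two localized tangent vectors must overlap.

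First, I would derive a uniform pointwise bound of the form
\[
|z_{l,m}(x)| \;\leq\; C\, e^{-\beta |x-a_l|/\varepsilon}, \qquad l=1,\dots,k,\ m=1,\dots,2N+2,
\]
for some $\beta>0$ independent of $\varepsilon$ and of $\mu_l\in I$. Inspecting \eqref{4-6}--\eqref{4-9} after pushing through the transform \eqref{4-3}, each $z_{l,m}$ is a unit-modulus oscillation multiplied by one of $\varepsilon\nabla\eta_{\mu_l,\varepsilon}(x-a_l)$, $(x-a_l)\varepsilon^{-1}\eta_{\mu_l,\varepsilon}(x-a_l)$, $\eta_{\mu_l,\varepsilon}(x-a_l)$, or $\partial_\mu\eta_{\mu_l,\varepsilon}(x-a_l)$. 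The basic decay \eqref{1-8} for $\eta_{\mu_l,\varepsilon}$ extends to $\varepsilon\nabla\eta_{\mu_l,\varepsilon}$ through \eqref{1-7} and to $\partial_\mu\eta_{\mu_l,\varepsilon}$ by differentiating \eqref{1-7} in $\mu$ and invoking standard exponentially-weighted elliptic estimates. The linear factor $(x-a_l)/\varepsilon$ appearing in $z_b$ is absorbed by trading $\alpha$ for a slightly smaller $\beta>0$.

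Next, with the pointwise bound in hand, I would split $\mathbb{R}^N$ into the two half-spaces $\{|x-a_i|\leq|x-a_j|\}$ and $\{|x-a_i|>|x-a_j|\}$. On the first, the triangle inequality gives $|x-a_j|\geq |a_i-a_j|/2\geq 2L$; on the second, symmetrically, $|x-a_i|\geq 2L$. In either case one of the two exponentials contributes the uniform constant factor $e^{-2\beta L/\varepsilon}$, while the remaining exponential is integrable:
\[
\int_{\mathbb{R}^N} e^{-\beta|x-a_i|/\varepsilon}\, e^{-\beta|x-a_j|/\varepsilon}\, dx \;\leq\; 2\,e^{-2\beta L/\varepsilon}\int_{\mathbb{R}^N} e^{-\beta |y|/\varepsilon}\, dy \;=\; C\,\varepsilon^N\, e^{-2\beta L/\varepsilon}.
\]
Combining this with the crude bound $|\omega_\varepsilon(z_{i,m},z_{j,n})|\leq \varepsilon^{-N}\int|z_{i,m}||z_{j,n}|\,dx$ coming from \eqref{3-1} then yields the claim with $c = 2\beta L$, which is strictly positive and independent of $\varepsilon$.

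The main obstacle is the second step, namely confirming that all four families $z_t,z_b,z_g,z_s$ enjoy exponential decay at a common rate $\beta$, uniform in $\mu_l\in I$. The decay for $\eta_{\mu,\varepsilon}$ and $\varepsilon\nabla\eta_{\mu,\varepsilon}$ follows from \eqref{1-8} and elliptic regularity applied to \eqref{1-7}, but the decay of $\partial_\mu\eta_{\mu,\varepsilon}$ requires controlling the $\mu$-dependence of the ground state uniformly on the compact interval $I$. Once these pointwise bounds are secured, the geometric splitting and the integral estimate are mechanical; the oscillatory phases and the factor $\mathbf{i}$ built into $z_b,z_g$ do not enter the absolute-value bound we need, which is why taking the full integrand rather than only its imaginary part causes no loss.
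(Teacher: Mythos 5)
Your proposal is correct and follows essentially the same route as the paper: pointwise exponential decay of all tangent-space bases from \eqref{1-8}, the uniform separation $|a_i-a_j|\gtrsim L$ coming from assumption (C) and the definition of $\Sigma_j$, and an integral estimate of the product of two exponentials that yields $O(e^{-c/\varepsilon})$ after accounting for the $\varepsilon^{-N}$ in $\omega_\varepsilon$. The only cosmetic difference is that you split $\mathbb{R}^N$ into half-spaces where the paper rescales $x'=(x-a_i)/\varepsilon$ and bounds the resulting convolution directly; your extra care about the decay of $\partial_\mu\eta_{\mu,\varepsilon}$ uniformly for $\mu\in I$ is a point the paper glosses over but does not change the argument.
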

\begin{proof}
	According to \eqref{1-8}, we have
	\begin{equation}\label{5-7}
	|z_{i,m}|\le Ce^{-\frac{c_1|x-a_i|}{\varepsilon}} \ \ \ and \ \ \ |z_{j,n}|\le Ce^{-\frac{c_2|x-a_j|}{\varepsilon}},
	\end{equation}
	where $a_i\in B_L(a_{i,0})$ and $a_j\in B_L(a_{j,0})$. By assumption (C) in Section 2, it holds $|a_i-a_j|> L$. If we denote $x'=(x-a_i)/\varepsilon$, we can deduce that
	\begin{equation}\label{5-8}
	\begin{split}
	|\omega_\varepsilon(z_{i,m},z_{j,n})|&\le C\int_{\mathbb{R}^N}\frac{1}{\varepsilon^N}e^{-\frac{c_1|x-a_i|}{\varepsilon}}e^{-\frac{c_2|x-a_j|}{\varepsilon}}dx\\
	&=C\int_{\mathbb{R}^N}e^{-c_1x'}e^{-c_2|x'-\frac{a_i-a_j}{\varepsilon}|}dx'\\
	&\le Ce^{-\frac{c}{\varepsilon}}
	\end{split}
	\end{equation}
	Hence the proof is complete.
\end{proof}

The next proposition is the main result of this section, which shows the skew orthogonal decomposition is unique in $U_\delta$ provided $\delta$ is small enough. 
\begin{proposition}\label{prop5-2}
If $\delta\ll \inf_{\mu\in I}m'(\mu)$, then there exists a unique $\boldsymbol{\sigma}=\boldsymbol{\sigma}(\psi)\in C^1(U_\delta,\boldsymbol{\Sigma})$ such that
\begin{equation}\label{5-9}
\omega_\varepsilon(\psi-\sum_{l=1}^k\Psi_{\sigma_j,\varepsilon},z)=0, \ \ \ \forall z\in \mathbf{T}_{\Psi_{\sigma_l,\varepsilon}}\mathbf{M} \ \ \ l=1,\cdots,k
\end{equation}
\end{proposition}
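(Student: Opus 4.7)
The natural strategy is to recast the orthogonality condition \eqref{5-9} as a nonlinear equation and invoke the implicit function theorem. Define
$$G : U_\delta \times \boldsymbol{\Sigma} \to \mathbb{R}^{(2N+2)k}, \qquad G_{j,m}(\psi,\boldsymbol{\sigma}) := \omega_\varepsilon\Big(\psi - \sum_{l=1}^k \Psi_{\sigma_l,\varepsilon},\; z_{j,m}\Big),$$
so that \eqref{5-9} becomes $G(\psi,\boldsymbol{\sigma}) = 0$. The plan is to establish uniform invertibility of the Jacobian $D_{\boldsymbol{\sigma}} G$ on the relevant domain, and then deploy the implicit function theorem starting from a near-minimizing parameter $\boldsymbol{\sigma}_0$.

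The core computation is the block structure of the Jacobian. Differentiating,
$$\partial_{\sigma_{i,n}} G_{j,m} = -\omega_\varepsilon\big(\partial_{\sigma_{i,n}} \Psi_{\sigma_i,\varepsilon},\; z_{j,m}\big) + \omega_\varepsilon\Big(\psi - \sum_l \Psi_{\sigma_l,\varepsilon},\; \partial_{\sigma_{i,n}} z_{j,m}\Big).$$
The second term is $O(\delta)$ since $\|\psi - \sum_l \Psi_{\sigma_l,\varepsilon}\|_{H^1} = O(\delta)$ on $U_\delta$. For the first term, the formulas \eqref{4-18}--\eqref{4-21} express each $\partial_{\sigma_{i,n}} \Psi_{\sigma_i,\varepsilon}$ as an explicit triangular linear combination of $\{z_{i,m'}\}_{m'=1}^{2N+2}$. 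When $i = j$, the resulting $(2N+2) \times (2N+2)$ diagonal block becomes, up to an invertible change of basis, the symplectic matrix $\Omega_{\Psi_{\sigma_i,\varepsilon}}$ from \eqref{4-15}; by Lemma \ref{lem4-1} and the hypothesis $p < 1 + 4/N$ (which forces $m'(\mu) > 0$ uniformly on $I$) its determinant is bounded below by $\big(\inf_{\mu \in I} m(\mu)\big)^{2N} \big(\inf_{\mu \in I} m'(\mu)\big)^2 > 0$. When $i \neq j$, every entry of the off-diagonal block is $O(e^{-c/\varepsilon})$ by Lemma \ref{lem5-1}. Hence
$$D_{\boldsymbol{\sigma}} G = \mathrm{diag}\big(\Omega_{\Psi_{\sigma_l,\varepsilon}}\big) + O(\delta + e^{-c/\varepsilon}),$$
which is invertible with uniformly bounded inverse once $\delta \ll \inf_{\mu \in I} m'(\mu)$ and $\varepsilon$ is small enough.

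Existence then follows routinely: given $\psi \in U_\delta$, choose $\boldsymbol{\sigma}_0 \in \boldsymbol{\Sigma}$ nearly realizing the infimum in \eqref{5-2}, so that $\|\psi - \sum_l \Psi_{\sigma_{l,0},\varepsilon}\|_{H^1} \le 2\delta$ and hence $|G(\psi,\boldsymbol{\sigma}_0)| = O(\delta)$ by Cauchy-Schwarz; the invertibility of $D_{\boldsymbol{\sigma}} G$ in a neighborhood of $\boldsymbol{\sigma}_0$ then produces a unique local zero $\boldsymbol{\sigma}(\psi)$ via IFT (or a Banach fixed point on $\boldsymbol{\sigma} \mapsto \boldsymbol{\sigma} - (D_{\boldsymbol{\sigma}} G)^{-1} G(\psi,\boldsymbol{\sigma})$), with $\boldsymbol{\sigma}(\cdot) \in C^1$. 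The main obstacle is global uniqueness across $U_\delta$: if $\tilde{\boldsymbol{\sigma}}$ is any second zero of $G(\psi,\cdot)$, writing
$$0 = G(\psi,\tilde{\boldsymbol{\sigma}}) - G(\psi,\boldsymbol{\sigma}(\psi)) = \bigg(\int_0^1 D_{\boldsymbol{\sigma}} G\big(\psi,(1-s)\boldsymbol{\sigma}(\psi) + s\tilde{\boldsymbol{\sigma}}\big)\,ds\bigg)\big(\tilde{\boldsymbol{\sigma}} - \boldsymbol{\sigma}(\psi)\big),$$
the averaged Jacobian remains invertible along the segment provided the segment stays inside the region where the block estimate above is valid, forcing $\tilde{\boldsymbol{\sigma}} = \boldsymbol{\sigma}(\psi)$. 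The hypothesis $\delta \ll \inf m'(\mu)$ is precisely what lets the $O(\delta)$ perturbation be absorbed by the diagonal blocks $\Omega_{\Psi_{\sigma_l,\varepsilon}}$ throughout this argument.
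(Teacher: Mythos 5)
Your proposal is correct and follows essentially the same route as the paper: the implicit function theorem applied to the same map $G_{j,m}(\psi,\boldsymbol{\sigma})=\omega_\varepsilon(\psi-\sum_l\Psi_{\sigma_l,\varepsilon},z_{j,m})$, with invertibility of the Jacobian coming from the block structure $\mathrm{diag}(\Omega_{\eta_{\mu_l,\varepsilon}})$ (Lemma \ref{lem4-1}) plus exponentially small cross terms (Lemma \ref{lem5-1}), the triangular relations \eqref{4-18}--\eqref{4-21} being absorbed by a change of basis where the paper instead uses the modified derivatives \eqref{5-10}--\eqref{5-12}. Your version is slightly more quantitative (Jacobian estimated at general $\psi\in U_\delta$ rather than only on $\mathbf{M}_k$, and uniqueness via the averaged Jacobian rather than a connectedness/covering argument), but the substance is the same.
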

\begin{proof}
	We denote 
	\begin{equation}\label{5-10}
	\partial_{i,n}=\varepsilon\partial_{\sigma_{i,n}}+\frac{v_{i,n}}{2}\partial_{\sigma_{i,2N+1}} \ \  for \ \ i=1,\cdots,k, \ \ \ n=1,\cdots,N,
	\end{equation}
    \begin{equation}\label{5-11}
    \partial_{i,n}=2\partial_{\sigma_{i,n}}+\frac{a_{i,n}}{\varepsilon}\partial_{\sigma_{i,2N+1}} \ \  for \ \ i=1,\cdots,k, \ \ \ n=N+1,\cdots,2N,
    \end{equation}
	and
	\begin{equation}\label{5-12}
	\partial_{i,n}=\partial_{\sigma_{i,n}} \ \  for \ \ i=1,\cdots,k, \ \ \ n=N+1,2N+2.
	\end{equation}
	Then we can apply the implicit function theorem for $G: H^1(\mathbb{R}^N,\mathbb{C})\to \mathbb{R}^{k(2N+2)}$ defined by
	\begin{equation*}
	G_{j,m}(\psi,\boldsymbol{\sigma}):=\langle \psi-\sum_{l=1}^k\Psi_{\sigma_l,\varepsilon},J^{-1}z_{j,m}\rangle_\varepsilon \ \ \ \forall j=1,\cdots,k \ and \ \forall m=1,\cdots,2N+2.
	\end{equation*}
	Since both $\Psi_{\sigma_l,\varepsilon}$ and $z_{j,m}$ are  $C^1$, and $G$ is linear in $\psi$, we see that $G$ is $C^1$ in $\boldsymbol{\sigma}$. It is obvious that 
	$G_{j,m}(\sum_{l=1}^k\Psi_{\sigma_l,\varepsilon},\boldsymbol{\sigma}_0)=0$ for any $\boldsymbol{\sigma}_0=(\sigma_1,\cdots,\sigma_k)\in\boldsymbol{\Sigma}$. We only need to verify that $\partial_{i,n}G_{j,m}(\sum_{l=1}^k\Psi_{\sigma_l,\varepsilon},\boldsymbol{\sigma})\big|_{\boldsymbol{\sigma}=\boldsymbol{\sigma}_0}$ is invertible.
	
	By \eqref{5-10} \eqref{5-11} and \eqref{5-12}, it holds
	\begin{equation}\label{5-13}
	\partial_{i,n}G_{j,m}(\sum_{l=1}^k\Psi_{\sigma_l,\varepsilon},\boldsymbol{\sigma})\big|_{\boldsymbol{\sigma}=\boldsymbol{\sigma}_0}=-\langle z_{i,n}, J^{-1}z_{j,m}\rangle_\varepsilon
	\end{equation}
	Using the canonical property \eqref{4-22} of $\mathcal{T}_{av\gamma}$ , we have
	\begin{equation}\label{5-14}
	\langle z_{i,n}, J^{-1}z_{i,m}\rangle_\varepsilon=\Omega_{\eta_{\mu_i,\varepsilon}}^{n,m}.
	\end{equation}
	By Lemma \ref{lem5-1}, For $i\neq j$ we have
	\begin{equation}\label{5-15}
	\langle z_{i,n}, J^{-1}z_{j,m}\rangle_\varepsilon=O(e^{-\frac{c}{\varepsilon}}).
	\end{equation}
	($z_{i,n}$ and $z_{j,m}$ with $i\neq j$ are skew orthogonal in an asymptotic sense.) Thus using Lemma \eqref{lem4-1}, we have that $\partial_{i,n}G_{j,m}(\sum_{l=1}^k\Psi_{\sigma_{0,l},\varepsilon},\boldsymbol{\sigma})\big|_{\boldsymbol{\sigma}=\boldsymbol{\sigma}_0}$ is invertible for all $\boldsymbol{\sigma}_0$ when $\varepsilon$ is sufficiently small. The implicit function theorem implies that there exist $\delta\ll \inf_{\mu\in I}m'(\mu)$ and a unique $C^1$ map $\boldsymbol{\sigma}=\boldsymbol{\sigma}(\psi)$ such that $G(\psi,\boldsymbol{\sigma}(\psi))=0$ in a neighborhood $V_{\boldsymbol{\sigma}_0}$ of $\sum_{l=1}^k\Psi_{\sigma_{0l},\varepsilon}$.
	
	Since $\boldsymbol{\Sigma}$ is connected and $\boldsymbol{\sigma}_0$ is chosen arbitrarily in $\boldsymbol{\Sigma}$, we can take $V_{\boldsymbol{\bar{\sigma}}}\cap V_{\boldsymbol{\sigma}_0}\neq \emptyset$ and expand $V_{\boldsymbol{\sigma}_0}$ to cover the whole neighborhood $U_\delta$ of $\mathbf{M}$. This completes the proof of Proposition \ref{prop5-2}. 
\end{proof}

Now we estimate $||\psi_0-\sum_{l=1}^k\Psi_{\sigma_j(0),\varepsilon}||_{H^1}$. We have
\begin{equation}\label{5-16}
    ||\Psi_{\sigma_{j}(0),\varepsilon}-\Psi_{\sigma_{j,0},\varepsilon}||_{H^1}\le C|\sigma_j(0)-\sigma_{j,0}|.
\end{equation}
Using that $\boldsymbol{\sigma}(0)=\boldsymbol{\sigma}(\psi_0)$,  $\boldsymbol{\sigma_0}=\boldsymbol{\sigma}(\sum_{l=1}^k\Psi_{\sigma_{l,0},\varepsilon})$ and $\sup_{\psi\in U_\delta}|\partial_\psi\boldsymbol{\sigma}(\psi)|\le C$, we have
\begin{equation}\label{5-17}
	|\boldsymbol{\sigma}(0)-\boldsymbol{\sigma}_0|\le C||\psi_0-\sum_{l=1}^k\Psi_{\sigma_{j,0},\varepsilon}||_{H^1}\le C\varepsilon_v.
\end{equation}
Thus from \eqref{5-16} and \eqref{5-17} we deduce that
\begin{equation}\label{5-18}
	||\psi_0-\sum_{l=1}^k\Psi_{\sigma_j(0),\varepsilon}||_{H^1}\le \sum_{l=1}^k||\Psi_{\sigma_{j}(0),\varepsilon}-\Psi_{\sigma_{j,0},\varepsilon}||_{H^1}+||\psi_0-\sum_{l=1}^k\Psi_{\sigma_{j,0},\varepsilon}||_{H^1} \le C\varepsilon_v.
\end{equation}

We have already known that for given initial data in Section 2, the Cauchy problem \eqref{1-3} has a solution in $C(\mathbb{R}^N; H^1(\mathbb{R}^N,\mathbb{C}))\cap C(\mathbb{R}^N; H^{-1}(\mathbb{R}^N,\mathbb{C}))$. So for $0\le t\le T$, if $\psi$ stays in the neighborhood $U_\delta$ of $\mathbf{M}_k$, then the $C^1$ trajectory $\boldsymbol{\sigma}(\psi(t))$ traced out by $\boldsymbol{\sigma}$ is unique. In the rest part of this paper, we may always assume $\varepsilon_v\ll \delta$.

\section{Equation of motion}
To study the dynamics of solitons, we should split $\psi$ into $k$ peaks, which is accomplished by making some smooth truncation near each soliton. To simplify notation, let $\mathcal{T}_j=\mathcal{T}_{a_jv_j\gamma_j}$,  and define
\begin{equation}\label{6-1}
u_j:=\mathcal{T}_j^{-1}(\varphi_j\cdot\psi), \ \ \ j=1,\cdots,k-1
\end{equation}
and
\begin{equation}\label{6-2}
u_k:=\mathcal{T}_k^{-1}(\psi-\sum\limits_{l=1}^{k-1}\mathcal{T}_lu_l),
\end{equation}
where $\varphi_j\in C_c^{\infty}$ are truncating functions satisfying
\begin{equation}\label{6-3}
\varphi_j(x)=\left\{
\begin{array}{lll}
1  \ \ \ & x\in B_L(a_{j,0}), \\
0            & x\in \mathbb{R}^N\setminus B_{2L}(a_{j,0})
\end{array}
\right.
\end{equation}
with $j=1,\cdots,k-1$. According to Proposition \ref{5-2}, for $\psi\in U_\delta$, it has a skew orthogonal decomposition:  
\begin{equation}\label{6-4}
\psi=\sum\limits_{l=1}^{k}\mathcal{T}_l\eta_{\mu_l,\varepsilon}+w.
\end{equation}
Define $w_j$ as
\begin{equation}\label{6-5}
\begin{split}
w_j:&=\mathcal{T}_j^{-1}(\varphi_j\cdot(\psi-\mathcal{T}_j\eta_{\mu_j,\varepsilon}))\\
&=\mathcal{T}_j^{-1}(\varphi_j\cdot(\sum\limits_{l\neq j}^{k}\mathcal{T}_l\eta_{\mu_l,\varepsilon}+w))), \ \ \ j=1,\cdots,k-1 
\end{split}
\end{equation}
and
\begin{equation}\label{6-6}
w_k:=\mathcal{T}_k^{-1}(w-\sum\limits_{l=1}^{k-1}\mathcal{T}_lw_l).
\end{equation}
Since $|a_{i}-a_{j}|>2L$ and $|v_j|< K\varepsilon$ for $i,j=1,\cdots,k$, $i\neq j$, by the definition of $H^1$ norm \eqref{1-4}, it holds for every fixed time $t$ that
\begin{equation}\label{6-7}
(\frac{1}{2}-\sup |v_j|)\sum\limits_{l=1}^k||w_l||_{H^1}+O(e^{-\frac{c}{\varepsilon}})\le||w||_{H^1}\le (1+\sup |v_j|)\sum\limits_{l=1}^k||w_l||_{H^1}+O(e^{-\frac{c}{\varepsilon}}), 
\end{equation}
provided that $\psi\in U_\delta$. From \eqref{6-5} and \eqref{6-6}, we also know that
\begin{equation}\label{6-8}
\mathcal{T}_ju_j(x)-\mathcal{T}_jw_j(x)=\left\{
\begin{array}{lll}
\Psi_{\sigma_j,\varepsilon}+O(\sum\limits_{i\neq j}^{k}e^{-\frac{c|x-a_i|}{\varepsilon}}) \ \ \ \forall x\in B_{2L}(a_{j,0}),\\
0  \ \ \ \ \ \ \ \ \ \  \ \ \ \ \ \ \ \ \ \  \ \ \ \ \ \ \ \ \ \ \ \forall x \in \mathbb{R}^N\setminus B_{2L}(a_{j,0}),
\end{array}
\right.
\end{equation}
for $j=1,\cdots,k-1$, and
\begin{equation}\label{6-9}
\mathcal{T}_ku_k(x)-\mathcal{T}_kw_k(x)=\left\{
\begin{array}{lll}
\Psi_{\sigma_k,\varepsilon}+O(\sum\limits_{j=1}^{k-1}e^{-\frac{c|x-a_j|}{\varepsilon}}) \ \ \ \forall x\in \mathbb{R}^N\setminus\cup_{j=1}^{k-1}B_{L}(a_{j,0}),\\
0  \ \ \ \ \ \ \ \ \ \  \ \ \ \ \ \ \ \ \ \  \ \ \ \ \ \ \ \ \ \ \ \forall x \in\cup_{j=1}^{k-1}B_{L}(a_{j,0}).
\end{array}
\right.
\end{equation}

We introduce the generators for $m=1,\cdots,N$:
\begin{equation*}
\mathcal{K}_{j,m}=-\varepsilon\mathcal{T}_j\partial_m, \ \ \mathcal{K}_{j,N+m}=\frac{(x-a_j)_m}{\varepsilon}\mathbf{i}\mathcal{T}_j, \ \ \mathcal{K}_{j,2N+1}=\mathbf{i}\mathcal{T}_j, \ \ \mathcal{K}_{j,2N+2}=\mathcal{T}_j\partial_{\mu_j},
\end{equation*}
and following coefficients corresponding to the basis $z_{j,m}$ defined in \eqref{5-5}
\begin{equation}\label{6-10}
\beta_{j,m}=\frac{\dot{a}_{j,m}}{\varepsilon^2}-\frac{v_{j,m}}{\varepsilon}, \ \ \ j=1,\cdots,k, \ \ \ m=1,\cdots,N,
\end{equation}
\begin{equation}\label{6-11}
\beta_{j,N+m}=\frac{1}{2}\dot{v}_{j,m}+\varepsilon\partial_{x_m} V_\varepsilon(a_j), \ \ \ j=1,\cdots,k, \ \ \ m=1,\cdots,N,
\end{equation}
\begin{equation}\label{6-12}
\beta_{j,2N+1}=\dot{\gamma}-\frac{\dot{a}_j}{2\varepsilon}\cdot v_j-\mu_j+\frac{|v_j|^2}{4}+V_\varepsilon(a_j), \ \ \ j=1,\cdots,k,
\end{equation}
\begin{equation}\label{6-13}
\beta_{j,2N+2}=\dot{\mu}_j, \ \ \ j=1,\cdots,k,
\end{equation}
where $a_{j,m}$ and $v_{j,m}$ are $m$-th component of $a_j$ and $v_j$. Denote
\begin{equation}\label{6-14}
\beta_j\cdot \mathcal{K}_j:=\sum\limits_{m=1}^{2N+2}\beta_{j,m}\cdot \mathcal{K}_{j,m} \ \ and \ \ \tilde{\beta}_j\cdot \tilde{\mathcal{K}}_j:=\sum\limits_{m=1}^{2N+1}\beta_{j,m}\cdot \mathcal{K}_{j,m}.
\end{equation}
We have the following theorem, which reparameterize the NSE \eqref{1-3} into the form we needed.
\begin{theorem}\label{thm5-1}
	If $\psi\in U_{\delta}$ satisfies \eqref{1-3}, then it holds
	\begin{equation}\label{6-15}
	\sum\limits_{l=1}^{k}\tilde{\beta}_l\cdot \tilde{\mathcal{K}}_lu_l+\sum\limits_{l=1}^{k}\mathcal{T}_l\dot{u_l}=\sum\limits_{l=1}^kJ\mathcal{T}_l(-\varepsilon^2\Delta u_l+\mu_lu_l)
	+\sum\limits_{l=1}^kJ\mathcal{R}_l\cdot\mathcal{T}_lu_l-Jf(\psi).
	\end{equation}
	where 
	\begin{equation}\label{6-16}
	\mathcal{R}_l=V_\varepsilon(x)-V_\varepsilon(a_l)-\nabla V_\varepsilon(a_l)\cdot (x-a_l)=O((\varepsilon_vx)^2),
	\end{equation}
	and $J$ is the operator of multiplication by $\mathbf{i}^{-1}$ defined in \eqref{3-3}.
\end{theorem}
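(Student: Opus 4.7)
The plan is to derive \eqref{6-15} by differentiating the decomposition $\psi=\sum_{l=1}^{k}\mathcal{T}_lu_l$ in time, substituting into the NSE, and then grouping terms by the generator $\mathcal{K}_{l,m}$ they are attached to. First I would rewrite \eqref{1-3} in the Hamiltonian form $\partial_t\psi=J\big(-\varepsilon^2\Delta\psi+V_\varepsilon\psi-f(\psi)\big)$ using $J=\mathbf{i}^{-1}$ from \eqref{3-3}, so that both sides of the equation I want to prove become linear combinations of functions in $H^{-1}(\mathbb{R}^N,\mathbb{C})$.

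Next I would compute the left-hand side $\partial_t(\mathcal{T}_lu_l)$ by the chain rule. Since $\mathcal{T}_l=\mathcal{T}^{tr}_{a_l}\mathcal{T}^b_{v_l}\mathcal{T}^g_{\gamma_l}$ depends on $t$ only through $a_l,v_l,\gamma_l$ (the parameter $\mu_l$ does not enter $\mathcal{T}_l$), one has
\[
\partial_t(\mathcal{T}_lu_l)=\mathcal{T}_l\dot u_l+\sum_{m=1}^{N}\dot a_{l,m}\,\partial_{a_{l,m}}(\mathcal{T}_lu_l)+\sum_{m=1}^{N}\dot v_{l,m}\,\partial_{v_{l,m}}(\mathcal{T}_lu_l)+\dot\gamma_l\,\partial_{\gamma_l}(\mathcal{T}_lu_l),
\]
and the formulas \eqref{4-18}--\eqref{4-20} let me express each of these partials as a linear combination of the first $2N+1$ generators $\mathcal{K}_{l,m}u_l$; this already explains why only $\tilde{\mathcal{K}}_l$ (and not $\mathcal{K}_{l,2N+2}$) enters the left-hand side. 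In parallel I would expand $-\varepsilon^2\Delta(\mathcal{T}_lu_l)$ by differentiating $e^{\mathbf{i}\phi_l}u_l(x-a_l)$ with $\phi_l=\tfrac{v_l\cdot(x-a_l)}{2\varepsilon}+\gamma_l$, which produces three pieces: $\mathcal{T}_l(-\varepsilon^2\Delta u_l)$, a drift term proportional to $v_l\cdot\mathcal{T}_l\nabla u_l$, and the kinetic phase $\tfrac{|v_l|^2}{4}\mathcal{T}_lu_l$. Taylor expansion of $V_\varepsilon$ around $a_l$ gives $V_\varepsilon(x)=V_\varepsilon(a_l)+\nabla V_\varepsilon(a_l)\cdot(x-a_l)+\mathcal{R}_l(x)$, with $\mathcal{R}_l=O((\varepsilon_vx)^2)$ by the second-derivative bound in assumption (A).

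I would then add and subtract $\mu_l\mathcal{T}_lu_l$ so that the leading $-\varepsilon^2\Delta+\mu_l$ piece can be collected under $\mathcal{T}_l$, producing the target term $\sum_lJ\mathcal{T}_l(-\varepsilon^2\Delta u_l+\mu_lu_l)$ on the right-hand side. All the remaining scalar contributions from $V_\varepsilon(a_l)\psi_l$, $\tfrac{|v_l|^2}{4}\psi_l$, $-\mu_l\psi_l$, $-\mathbf{i}\varepsilon v_l\cdot\mathcal{T}_l\nabla u_l$ and $\nabla V_\varepsilon(a_l)\cdot(x-a_l)\psi_l$ can be rewritten in terms of the generators by means of $\mathbf{i}\mathcal{T}_lu_l=\mathcal{K}_{l,2N+1}u_l$, $\mathcal{T}_l\nabla u_l=-\varepsilon^{-1}\sum_m\mathbf{e}_m\mathcal{K}_{l,m}u_l$ and $(x-a_l)_m\mathcal{T}_lu_l=-\mathbf{i}\varepsilon\,\mathcal{K}_{l,N+m}u_l$. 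Applying $J$ to each of these scalar-times-generator pieces is a routine multiplication by $-\mathbf{i}$.

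To finish I would equate LHS$-$RHS and move every generator-coefficient term to the left; the coefficient in front of $\mathcal{K}_{l,m}u_l$ turns out, for $m=1,\dots,N$, to be a multiple of $\tfrac{\dot a_{l,m}}{\varepsilon^2}-\tfrac{v_{l,m}}{\varepsilon}$, for $m=N+1,\dots,2N$ a multiple of $\tfrac12\dot v_{l,m}+\varepsilon\partial_{x_m}V_\varepsilon(a_l)$, and for $m=2N+1$ a multiple of $\dot\gamma_l-\tfrac{\dot a_l\cdot v_l}{2\varepsilon}-\mu_l+\tfrac{|v_l|^2}{4}+V_\varepsilon(a_l)$, matching exactly the definitions \eqref{6-10}--\eqref{6-12} of $\beta_{l,m}$. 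What survives on the right is $\sum_lJ\mathcal{T}_l(-\varepsilon^2\Delta u_l+\mu_lu_l)+\sum_lJ\mathcal{R}_l\cdot\mathcal{T}_lu_l-Jf(\psi)$, which is the target \eqref{6-15}. The main obstacle is keeping the $\varepsilon$-factors and the $\mathbf{i}$'s straight while translating between $e^{\mathbf{i}\phi_l}$-differentiations and the generator language, and choosing the add-and-subtract of $\mu_l\mathcal{T}_lu_l$ consistently; the $\mathcal{R}_l$ bound and the absence of $\partial_{\mu_l}$ from $\mathcal{T}_l$ are easy consequences of assumption (A) and the definition \eqref{4-3} respectively.
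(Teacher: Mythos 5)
Your proposal is correct and follows essentially the same route as the paper: rewrite the equation in Hamiltonian form for $\sum_l\partial_t(\mathcal{T}_lu_l)$, apply the chain rule to the explicit $a_l,v_l,\gamma_l$ dependence of $\mathcal{T}_l$ on the left, expand $-\varepsilon^2\Delta(\mathcal{T}_lu_l)$ and Taylor-expand $V_\varepsilon$ about $a_l$ on the right, then add and subtract the $\mu_l\mathcal{T}_lu_l$, $V_\varepsilon(a_l)\mathcal{T}_lu_l$ and $\nabla V_\varepsilon(a_l)\cdot(x-a_l)\mathcal{T}_lu_l$ terms so the residual coefficients assemble into the $\beta_{l,m}$ of \eqref{6-10}--\eqref{6-12}. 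Your extra observation that $\mu_l$ does not enter $\mathcal{T}_l$, which is why only $\tilde{\mathcal{K}}_l$ appears on the left, is consistent with the paper's computation.
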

\begin{proof}
	We can rewrite the first equation of \eqref{1-3}: 
	$$\mathbf{i}\partial_t\psi=-\varepsilon^2\Delta\psi+V_\varepsilon(x)\psi-f(\psi),$$
	as 
	\begin{equation}\label{6-17}
   	\sum\limits_{l=1}^{k}\partial_t(\mathcal{T}_lu_l)=\sum\limits_{l=1}^{k}J(-\varepsilon^2\Delta \mathcal{T}_lu_l+V_\varepsilon(x)\mathcal{T}_lu_l)-Jf(\psi).
	\end{equation}
	Since $\mathcal{T}_jv=e^{\frac{\mathbf{i}}{2}v_j\cdot\frac{x-a_j}{\varepsilon}+\gamma_j}v(x-a_j)$, for the left side of \eqref{6-17}, we have
	\begin{equation}\label{6-18}
	\begin{split}
	\sum\limits_{l=1}^{k}\partial_t(\mathcal{T}_lu_l)&=	\sum\limits_{l=1}^{k}(\frac{1}{2}\dot{v}\cdot\frac{x-a_l}{\varepsilon}-\frac{1}{2}v\cdot\frac{\dot{a}}{\varepsilon}+\dot{\gamma})\mathbf{i}\mathcal{T}_lu_l\\
	&+\sum\limits_{l=1}^{k}\frac{\dot{a_l}}{\varepsilon}(-\mathcal{T}_l\nabla u_l)+\sum\limits_{l=1}^{k}\mathcal{T}_l\dot{u_l},
	\end{split}
	\end{equation}
	While the right side of \eqref{6-17} can be rewritten as
	\begin{equation}\label{6-19}
	\begin{split}
	\sum\limits_{l=1}^{k}J(&-\varepsilon^2\Delta \mathcal{T}_lu_l+V_\varepsilon(x)\mathcal{T}_lu_l)-Jf(\psi)\\
	&=\sum\limits_{l=1}^k\big(J\mathcal{T}_l(-\varepsilon^2\Delta u_l)+v_l(-\mathcal{T}_l\nabla u_l)-\frac{v_l^2}{4}(\mathbf{i}\mathcal{T}_lu_l)+JV_\varepsilon(x)(\mathcal{T}_lu_l)\big)-Jf(\psi).
	\end{split}
	\end{equation}
	Adding 
	\begin{equation}\label{6-20}
	\sum\limits_{l=1}^k\big(\mu_l(-\mathbf{i}\mathcal{T}_lu_l)-JV_\varepsilon(a_l)\mathcal{T}_lu_l-J\nabla V_\varepsilon(a_l)(x-a_l)\mathcal{T}_lu_l\big)
	\end{equation}
	and using notations \eqref{6-10}-\eqref{6-14} to collect parameters into $\tilde{\beta}$, we conclude \eqref{6-15}.	
\end{proof}

We want to derive the modulation formula for solitons dynamics. To split the nonlinear term $f(\psi)$, using the definition of $u_j$ and $w_j$, we deduce that
\begin{equation}\label{6-21}
f(\psi)=\sum\limits_{l=1}^k \mathcal{T}_j f(u_j)+\zeta(w)+O(e^{-\frac{c}{\varepsilon}}),
\end{equation}
where $\zeta(w)\in H^1_0(\mathbb{R}^N)$ is the boundary term with $supp\{\zeta(w)\}\subset\cup_{l=1}^{k-1}\big(B_{2L}(a_{l,0})\setminus B_L(a_{l,0})\big)$. Thus for the right side of \eqref{6-15}, we have
\begin{equation*}
\begin{split}
\sum\limits_{l=1}^kJ\mathcal{T}_l(-\varepsilon^2\Delta u_l+\mu_lu_l)
&+\sum\limits_{l=1}^kJ\mathcal{R}_l\cdot\mathcal{T}_lu_l-Jf(\psi)\\
&=\sum\limits_{l=1}^kJ\mathcal{E}'_{\mu_l,\varepsilon}(u_l)+\sum\limits_{l=1}^kJ\mathcal{R}_l\cdot\mathcal{T}_lu_l-J\zeta(w)+O(e^{-\frac{c}{\varepsilon}}).
\end{split}
\end{equation*}
From \eqref{6-8}, we obtain
\begin{equation}\label{6-22}
\begin{split}
\mathcal{E}'_{\mu_j,\varepsilon}(u_j)&=\mathcal{E}'_{\mu_j,\varepsilon}(\eta_{\mu_j,\varepsilon}+w_j)+O(e^{-\frac{c}{\varepsilon}})\\
&=\mathcal{L}_{\eta_{\mu_j,\varepsilon}}(w_j)+N_{\eta_{\mu_j,\varepsilon}}(w_j)+O(e^{-\frac{c}{\varepsilon}})
\end{split}
\end{equation}
Since $\dot\eta_{\mu_j,\varepsilon}=\dot{\mu}_j\partial_{\mu_j}\eta_{\mu_j,\varepsilon}$, we can rewrite \eqref{6-15} as
\begin{equation}\label{6-23}
\begin{split}
\sum\limits_{l=1}^k\big(\beta_l\cdot\mathcal{K}\eta_{\mu_l,\varepsilon}+\tilde{\beta}_l\cdot\tilde{\mathcal{K}}w_l+\mathcal{T}_l\dot{w_l}\big)=\sum\limits_{l=1}^k\big(J\mathcal{T}_l\mathcal{L}_{\eta_{\mu_l,\varepsilon}}(w_l)&+J\mathcal{T}_lN_{\eta_{\mu_l,\varepsilon}}(w_l)+JR_l\cdot \mathcal{T}_lu_l)\big)\\
&+J\zeta(w)+O(e^{-\frac{c}{\varepsilon}}).
\end{split}
\end{equation}
If we denote
\begin{equation}\label{6-24}
W(w):=\sum\limits_{l=1}^k\big(J\mathcal{T}_l\mathcal{L}_{\eta_{\mu_l,\varepsilon}}(w_l)+J\mathcal{T}_lN_{\eta_{\mu_l,\varepsilon}}(w_l)+JR_l\cdot \mathcal{T}_lw_l-\tilde{\beta}_l\cdot\tilde{\mathcal{K}}w_l\big)+J\zeta(w),
\end{equation}
and
\begin{equation}\label{6-25}
Q(\boldsymbol{\sigma}):=\sum\limits_{l=1}^k\big(JR_l\cdot \mathcal{T}_l\eta_{\mu_l,\varepsilon}-J\mathcal{T}_l\mathcal{L}_{\eta_{\mu_l,\varepsilon}}(\eta_{\mu_l,\varepsilon})\big),
\end{equation}
we obtain
\begin{equation}\label{6-26}
\sum\limits_{l=1}^k\mathcal{T}_l\dot{w_l}=W(w)+Q(\boldsymbol{\sigma})+O(e^{-\frac{c}{\varepsilon}}).
\end{equation}

We will let $Jz_j$ act on \eqref{6-23}, and hope that the surplus variables can be eliminated by the skew orthogonal condition. For this purpose, we give some useful facts for $z_j\in \mathbf{T}_{\Psi_{\sigma_j,\varepsilon}}\mathbf{M}$ with $j=1,\cdots,k$ . Using the definition of decomposition in Proposition \ref{prop5-2}, we have $\langle Jz_j,J\mathcal{L}_{\eta_{\mu_j,\varepsilon}}(w)\rangle_\varepsilon=0$. By the smooth truncation as we have done in \eqref{6-1} and \eqref{6-5}, it holds
\begin{equation}\label{6-27}
\langle Jz_j,J\mathcal{T}_j\mathcal{L}_{\eta_{\mu_j,\varepsilon}}(w_j)\rangle_\varepsilon=O(e^{-\frac{c}{\varepsilon}}).
\end{equation}
Similarly, for $z_i\in \mathbf{T}_{\Psi_{\sigma_i,\varepsilon}}\mathbf{M}$ and $i\neq j$, we have
\begin{equation}\label{6-28}
\langle Jz_i,J\mathcal{T}_j\mathcal{L}_{\eta_{\mu_j,\varepsilon}(w_j)}\rangle_\varepsilon=O(e^{-\frac{c}{\varepsilon}}).
\end{equation}
Then, we consider the derivatives with respect to time $t$. For $\bar{z}_j\in \mathbf{T}_{\Psi_{\sigma_{0,j},\varepsilon}}\mathbf{M}$ where $\sigma_{0,j}=(0,0,0,\mu_j)$, and $j=1,\cdots,k$, it holds
\begin{equation}\label{6-29}
\begin{split}
0=\partial_t\langle J\bar{z}_j,\mathcal{T}_j^{-1}w\rangle_\varepsilon&=\langle J\bar{z}_j,\partial_t(\mathcal{T}_j^{-1}w)\rangle_\varepsilon+\dot\mu_j\langle J\partial_{\mu_j}\bar{z}_j,\mathcal{T}_j^{-1}w\rangle_\varepsilon\\
&=\langle Jz_j,\mathcal{T}_j\partial_t(\mathcal{T}_j^{-1}w)\rangle_\varepsilon+\dot\mu_j\langle J\partial_{\mu_j}z_j,w\rangle_\varepsilon,
\end{split}
\end{equation}
where $z_j=\mathcal{T}_j\bar{z}_j\in\mathbf{T}_{\Psi_{\sigma_j,\varepsilon}}\mathbf{M}$. We can use \eqref{6-3} again to deduce
\begin{equation}\label{6-30}
\begin{split}
\langle Jz_j,\mathcal{T}_j\dot{w}_j\rangle_\varepsilon&=-\dot{\mu}_j\langle J\partial_{\mu_j}z_j,\mathcal{T}_jw_j\rangle_\varepsilon+O(e^{-\frac{c}{\varepsilon}})\\
&=\beta_{j,2N+2}\langle Jz_j,\mathcal{K}_{j,2N+2}w_j\rangle_\varepsilon+O(e^{-\frac{c}{\varepsilon}}).
\end{split}
\end{equation}
While for $z_i\in \mathbf{T}_{\Psi_{\sigma_i,\varepsilon}}\mathbf{M}$ and $i\neq j$ it holds
\begin{equation}\label{6-31}
\langle Jz_i,\mathcal{T}_j\dot{w}_j\rangle_\varepsilon=O(e^{-\frac{c}{\varepsilon}}).
\end{equation}

Now, we let $Jz_{i,n}$ act on both sides of \eqref{6-23}. Since the support of $J\zeta(w)$ is contained in $\cup_{l=1}^{k-1}\big(B_{2L}(a_{l,0})\setminus B_L(a_{l,0})\big)$, we have $\langle Jz_{i,n},J\zeta(w)\rangle_\varepsilon=O(e^{-\frac{c}{\varepsilon}})$ for $i=1,\cdots,k$ and $n=1,\cdots,2N+2$.
Recalling that $\mathcal{K}_{j,m}\eta_{\mu_j,\varepsilon}=z_{j,m}$ are exactly the skew orthogonal bases of $\mathbf{T}_{\Psi_{\sigma_j,\varepsilon}}\mathbf{M}$, we can obtain
\begin{equation}\label{6-32}
\begin{split}
\sum\limits_{j=1}^k\sum\limits_{m=1}^{2N+2}\Omega_{j,m}^{i,n}\beta_{j,m}=\langle z_{i,n}, \sum\limits_{l=1}^k\big(\mathcal{T}_lN_{\eta_{\mu_l,\varepsilon}}&(w_l)+R_l\cdot \mathcal{T}_l(\eta_{\mu_l,\varepsilon}+w_l)\big)\rangle_\varepsilon\\
&-\sum\limits_{l=1}^k \beta_l\langle J z_{i,n}, \mathcal{K}_lw_l\rangle_\varepsilon+O(e^{-\frac{c}{\varepsilon}}),
\end{split}
\end{equation}
where
\begin{equation}\label{6-33}
\Omega_{j,m}^{i,n}=\langle z_{i,n}, J^{-1}z_{j,m}\rangle_\varepsilon=\left\{
\begin{array}{lll}
\Omega_{\eta_{\mu_i,\varepsilon}}^{m,n} \ \ \ & i=j, \\
O(e^{-\frac{c}{\varepsilon}})        & i\neq j.
\end{array}
\right.
\end{equation}
Thus, using the notations \eqref{6-10}-\eqref{6-13} and the expression \eqref{4-15} of matrix $	\Omega_{\eta_{\mu,\varepsilon}}$, we can rewrite \eqref{6-32} as four equations:
\begin{equation}\label{6-34}
\begin{split}
\frac{\dot{a}_{j,m}}{\varepsilon^2}=\frac{v_{j,m}}{\varepsilon}+(m(\mu_j))^{-1}\big(\langle z_{j,N+m},(\mathcal{T}_jN_{\eta_{\mu_j,\varepsilon}}&(w_j)+R_j\cdot \mathcal{T}_j(w_j))\ \rangle_\varepsilon\\
&-\beta_j\cdot\langle Jz_{j,N+m}, \mathcal{K}_jw_j\rangle_\varepsilon\big)+O(e^{-\frac{c}{\varepsilon}}),
\end{split}
\end{equation}
\begin{equation}\label{6-35}
\begin{split}
\frac{1}{2}\dot{v}_{j,m}=-\varepsilon\partial_{x_m}V_\varepsilon(a_j)&-(m(\mu_j))^{-1}\big(\langle z_{j,m},(\mathcal{T}_jN_{\eta_{\mu_j,\varepsilon}}(w_j)+R_j\cdot \mathcal{T}_j(w_j))\ \rangle_\varepsilon\\
&+\langle z_{j,m}, R_j\cdot \mathcal{T}_j(\eta_{\mu_j,\varepsilon})\rangle_\varepsilon-\beta_j\cdot\langle Jz_{j,m}, \mathcal{K}_jw_j\rangle_\varepsilon\big)+O(e^{-\frac{c}{\varepsilon}}),
\end{split}
\end{equation}
\begin{equation}\label{6-36}
\begin{split}
\dot{\gamma}_j=\mu_j+\frac{v_j}{2}\cdot\frac{\dot{a_j}}{\varepsilon}&-\frac{|v_j|^2}{4}-V_\varepsilon(a_j)
\\&-(m'(\mu_j))^{-1}\big(\langle z_{j,2N+2},(\mathcal{T}_jN_{\eta_{\mu_j,\varepsilon}}(w_j)+R_j\cdot \mathcal{T}_j(w_j))\ \rangle_\varepsilon
\\&+\langle z_{j,2N+2}, R_j\cdot \mathcal{T}_j(\eta_{\mu_j,\varepsilon})\rangle_\varepsilon-\beta_j\cdot\langle Jz_{j,2N+2}, \mathcal{K}_jw_j\rangle_\varepsilon\big)+O(e^{-\frac{c}{\varepsilon}}),
\end{split}
\end{equation}
\begin{equation}\label{6-37}
\begin{split}
\dot{\mu}_j=(m'(\mu_j))^{-1}\big(\langle z_{j,2N+1},(\mathcal{T}_jN_{\eta_{\mu_j,\varepsilon}}(w_j)&+R_j\cdot \mathcal{T}_j(w_j))\ \rangle_\varepsilon\\
&-\beta_j\cdot\langle Jz_{j,2N+1}, \mathcal{K}_jw_j\rangle_\varepsilon\big)+O(e^{-\frac{c}{\varepsilon}}),
\end{split}
\end{equation}
where $m(\mu_j)$, $m'(\mu_j)$ are given in \eqref{4-14} \eqref{4-16}, and we use the facts $\langle J\eta_{\mu_j,\varepsilon},R_j\eta_{\mu_j,\varepsilon}\rangle_\varepsilon=0$, $\langle \frac{x_m}{\varepsilon}J\eta_{\mu_j,\varepsilon},R_j\eta_{\mu_j,\varepsilon}\rangle_\varepsilon=0$, $\langle \frac{x_m}{\varepsilon}\eta_{\mu_j,\varepsilon},\eta_{\mu_j,\varepsilon}\rangle_\varepsilon=0$ for $j=1,\cdots,k$ and $m=1,\cdots,N$.

If we denote $\underline{\sigma}_j=(\frac{a_j}{\varepsilon^2}, \frac{v_j}{2}, \gamma_j, \mu_j)$ and $\underline{\boldsymbol{\sigma}}=(\underline{\sigma}_1,\cdots,\underline{\sigma}_1)$, we can abbreviate \eqref{6-34}-\eqref{6-37} as
\begin{equation}\label{6-38}
\underline{\dot{\boldsymbol{\sigma}}}=X(\underline{\boldsymbol{\sigma}})-X_\delta(\underline{\boldsymbol{\sigma}},w)+O(e^{-\frac{c}{\varepsilon}}),
\end{equation}
where
\begin{equation}\label{6-39}
X_{j,m}(\underline{\boldsymbol{\sigma}})=\underline{\dot\sigma}_{j,m}-\beta_{j,m}
\end{equation}
and
\begin{equation}\label{6-40}
\begin{split}
X_{\delta}^{i,n}(\underline{\boldsymbol{\sigma}},w)=\sum\limits_{j=1}^k\sum\limits_{m=1}^{2N+2}(\Omega_{j,m}^{i,n})^{-1}\big(\langle z_{i,n}, \sum\limits_{l=1}^k\big(\mathcal{T}_lN_{\eta_{\mu_l,\varepsilon}}(w_l)&+R_l\cdot \mathcal{T}_l(\eta_{\mu_l,\varepsilon}+w_l)\big)\rangle_\varepsilon
\\
&-\sum\limits_{l=1}^k \beta_l\langle Jz_{i,n}, \mathcal{K}_jw_l\rangle_\varepsilon\big).
\end{split}
\end{equation}
Notice that the last term on the right side of \eqref{6-40} is of order $O\big(|\beta|\sum\limits_{l=1}^k||w_l||_{L^2}\big)$ by the Green's theorem. On the other hand, it holds $R_j\le C\varepsilon_v^2$ and $N_{\eta_{\mu_j,\varepsilon}}(w_j)\le C||w_j||_{H^1}^2$ for $j=1,\cdots,k$. By the left side of \eqref{6-7}, we have
\begin{equation}\label{6-41}
\begin{split}
X_{\delta}(\underline{\boldsymbol{\sigma}},w)&=O(|\beta|_\infty\sum\limits_{l=1}^k||w_l||_{L^2}+\varepsilon_v^2+\sum\limits_{l=1}^k||w_l||^2_{H^1})\\
&=O(|\beta|_\infty||w||_{L^2}+\varepsilon_v^2+||w||^2_{H^1})
\end{split}
\end{equation}
with $|\beta|_\infty=\max|\beta_{j,m}|$, provided $||w||_{H^1}\le \delta$. 

To summarize, we get the following proposition:
\begin{proposition}\label{prop6-1}
	$\underline{\boldsymbol{\sigma}}$ and perturbation $w_j$ satisfy
		\begin{equation}\label{6-42}
		\underline{\dot{\boldsymbol{\sigma}}}=X(\underline{\boldsymbol{\sigma}})-X_\delta(\underline{\boldsymbol{\sigma}},w)+O(e^{-\frac{c}{\varepsilon}})
		\end{equation}
		and
		\begin{equation}\label{6-43}
		\sum\limits_{l=1}^k\mathcal{T}_l\dot{w_l}=W(w)+Q(\boldsymbol{\sigma})+O(e^{-\frac{c}{\varepsilon}}),
		\end{equation}
		where $W(w)$ and $Q(\boldsymbol{\sigma})$ are defined in \eqref{6-24} and \eqref{6-25}. Furthermore, we have the following estimate for the vector field $X_\delta$:
		\begin{equation}\label{6-44}
		X_\delta=O(|\beta|_\infty||w||_{L^2}+\varepsilon_v^2+||w||^2_{H^1})
		\end{equation}
		with $|\beta|_\infty=\max|\beta_{j,m}|$, provided $||w||_{H^1}\le \delta$.
\end{proposition}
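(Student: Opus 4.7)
The plan is to realize Proposition \ref{prop6-1} as a direct aggregation of the componentwise identities (\ref{6-34})--(\ref{6-37}) and the perturbation evolution (\ref{6-26}) derived in the paragraphs preceding the statement, so the real work reduces to assembling these cleanly and to establishing the bound (\ref{6-44}) on $X_\delta$. My starting point is the reparameterized NSE (\ref{6-15}) from Theorem \ref{thm5-1}. Substituting the decomposition $f(\psi)=\sum_l\mathcal{T}_l f(u_l)+\zeta(w)+O(e^{-c/\varepsilon})$ of (\ref{6-21}), the Taylor expansion $\mathcal{E}'_{\mu_j,\varepsilon}(u_j)=\mathcal{L}_{\eta_{\mu_j,\varepsilon}}(w_j)+N_{\eta_{\mu_j,\varepsilon}}(w_j)+O(e^{-c/\varepsilon})$ from (\ref{6-22}), and the identity $\dot\eta_{\mu_l,\varepsilon}=\dot\mu_l\,\partial_{\mu_l}\eta_{\mu_l,\varepsilon}$ (the mechanism that absorbs the $\mu$-modulation into the $\beta_{l,2N+2}\mathcal{K}_{l,2N+2}\eta_{\mu_l,\varepsilon}$ piece), one obtains (\ref{6-23}); isolating the $\dot w_l$ terms on the left and recognizing the remaining pieces as $W(w)+Q(\boldsymbol{\sigma})$ in (\ref{6-24})--(\ref{6-25}) is then exactly (\ref{6-43}).

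For the modulation system (\ref{6-42}), I would pair (\ref{6-23}) with each $Jz_{i,n}$ in $\langle\cdot,\cdot\rangle_\varepsilon$. Three cancellations drive the argument: the skew-orthogonality supplied by Proposition \ref{prop5-2}, which together with the truncation estimates (\ref{6-27})--(\ref{6-28}) kills the $\mathcal{L}_{\eta_{\mu_l,\varepsilon}}w_l$ pairings modulo $O(e^{-c/\varepsilon})$; the time-differentiated orthogonality relations (\ref{6-30})--(\ref{6-31}) that absorb the $\mathcal{T}_l\dot w_l$ pairings into $\beta$-linear contributions; and the negligibility of $\langle Jz_{i,n},J\zeta(w)\rangle_\varepsilon$, which holds since $\operatorname{supp}\zeta(w)$ sits in an annular shell disjoint from the essential support of $z_{i,n}$. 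What survives on the left is $\sum_{j,m}\Omega^{i,n}_{j,m}\beta_{j,m}$. By Lemma \ref{lem5-1} the off-diagonal blocks satisfy $\Omega^{i,n}_{j,m}=O(e^{-c/\varepsilon})$ for $i\neq j$, while Lemma \ref{lem4-1} makes each diagonal block $\Omega_{\eta_{\mu_i,\varepsilon}}$ invertible with inverse read off from (\ref{4-15}). A straightforward perturbation of this nearly block-diagonal matrix yields (\ref{6-34})--(\ref{6-37}) componentwise, and bundling them into the $\underline{\boldsymbol{\sigma}}$ notation is (\ref{6-42}).

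For the estimate (\ref{6-44}) on $X_\delta$ I would bound each ingredient of (\ref{6-40}) separately. The pointwise bound $|R_l(x)|\le C\varepsilon_v^2$ on the essential support of $\eta_{\mu_l,\varepsilon}$ (from (\ref{1-8}) and Assumption (A)) gives $|\langle z_{i,n},R_l\mathcal{T}_l\eta_{\mu_l,\varepsilon}\rangle_\varepsilon|=O(\varepsilon_v^2)$, while (\ref{3-16}) provides $\|N_{\eta_{\mu_l,\varepsilon}}(w_l)\|_{H^{-1}}\le C\|w_l\|_{H^1}^2$ and hence $|\langle z_{i,n},\mathcal{T}_lN_{\eta_{\mu_l,\varepsilon}}(w_l)\rangle_\varepsilon|=O(\|w_l\|_{H^1}^2)$. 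For the $\beta$-dependent terms $|\beta_l\langle Jz_{i,n},\mathcal{K}_jw_l\rangle_\varepsilon|$, I would integrate by parts to shift the derivative in $\mathcal{K}_{j,m}$ ($m\le N$) onto the exponentially decaying $z_{i,n}$, and handle the unbounded multiplier $(x-a_j)/\varepsilon$ sitting in $\mathcal{K}_{j,N+m}$ using the same exponential localization, producing an $O(|\beta|_\infty\|w_l\|_{L^2})$ estimate. Summing over $l$ and converting $\sum_l\|w_l\|$ into $\|w\|$ via (\ref{6-7}) yields (\ref{6-44}).

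The principal technical obstacle is controlling the $\mathcal{K}_{j,N+m}$ contribution to $X_\delta$: the multiplier $(x-a_j)/\varepsilon$ is large off the essential support of the soliton, so the bound cannot rely on any pointwise control of $w_l$ and must instead use the exponential decay of $z_{i,n}\sim\eta_{\mu_i,\varepsilon}$ from (\ref{1-8}) to localize the integrand. Once this is in hand, the perturbation argument needed to invert the nearly block-diagonal matrix $(\Omega^{i,n}_{j,m})$ is routine, since Lemma \ref{lem5-1} makes the off-diagonal blocks exponentially small as $\varepsilon\to 0$.
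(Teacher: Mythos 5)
Your proposal is correct and follows essentially the same route as the paper, which proves Proposition \ref{prop6-1} precisely by assembling \eqref{6-26} and the componentwise modulation equations \eqref{6-34}--\eqref{6-37} obtained from pairing \eqref{6-23} with $Jz_{i,n}$, and then bounding the terms of \eqref{6-40} exactly as you describe ($|R_l|=O(\varepsilon_v^2)$ against the exponentially localized soliton, \eqref{3-16} for $N_{\eta_{\mu_l,\varepsilon}}$, Green's theorem for the $\beta$-terms, and \eqref{6-7} to pass from $\sum_l\|w_l\|$ to $\|w\|$). No substantive differences.
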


In the next two sections, we are going to show $\sup_{t\in(0,T]}||w_l||_{H^1}=O(\varepsilon_v)$ for $T=\frac{T_0}{\varepsilon^2}$ with $T_0$ a positive constant, from which $\sup_{t\in(0,T]}|X_\delta|=O(\varepsilon_v^2)$ and the main result follows.

\section{Evolution of energy}

By the definition of $u_j$, we see that $u_j$ is very close to $\eta_{\mu_j,\varepsilon}$. In this section, we are going to prove that $\sum\limits_{l=1}^k\partial_t\mathcal{E}_{\mu_l,\varepsilon}(u_l)-\sum\limits_{l=1}^k\partial_t\mathcal{E}_{\mu_l}(\eta_{\mu_l,\varepsilon})$ is small, so that we are able to control $\sum\limits_{l=1}^k||w_l||_{H^1}$ or $||w||_{H^1}$ in a large time interval. First, we have following lemma, which gives the derivative with respect to $t$ of energy functional $\sum\limits_{l=1}^k\mathcal{E}_{\mu_l,\varepsilon}(u_l)$:
\begin{lemma}\label{lem7-1}
	It holds
	\begin{equation}\label{7-1}
	\begin{split}
	\sum\limits_{l=1}^k\partial_t\mathcal{E}_{\mu_l,\varepsilon}(u_l)=\frac{1}{2}\sum\limits_{l=1}^k\dot{\mu}_l||u_l||_{L^2}^2&-\sum\limits_{l=1}^k\langle(\frac{\dot v_l}{2}+\varepsilon\nabla V_{\varepsilon,a_l})\mathbf{i}u_l,\varepsilon\nabla u_l\rangle_\varepsilon\\
	&+C\varepsilon^2||w||_{H^1}^2+O(e^{-\frac{c}{\varepsilon}}),
	\end{split}
	\end{equation}
	where $V_{\varepsilon,a}(x):=V_\varepsilon(x+a)$ for any $a\in \mathbb{R}^N$.
\end{lemma}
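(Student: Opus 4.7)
The plan is to derive the identity from the conservation law $\frac{d}{dt}\mathcal{H}_{V,\varepsilon}(\psi)\equiv 0$ combined with a careful splitting of the total energy into each soliton's contribution. Writing $\psi=\sum_l\mathcal{T}_l u_l$ and noting that the cutoffs $\varphi_l$ have pairwise disjoint bodies (since $|a_{i,0}-a_{j,0}|>6L$), the only non-trivial overlaps lie in the annuli $B_{2L}(a_{l,0})\setminus B_L(a_{l,0})$, where every soliton profile is already of size $e^{-cL/\varepsilon}$, so that $\psi=w+O(e^{-c/\varepsilon})$. I would then write
\[
\mathcal{H}_{V,\varepsilon}(\psi)=\sum_{l=1}^{k}\mathcal{H}_{V,\varepsilon}(\mathcal{T}_l u_l)+\mathcal{I}(w)+O(e^{-c/\varepsilon}),
\]
in which the interaction $\mathcal{I}(w)$ collects only the kinetic cross products supported in those annuli; after integration by parts these reduce to $|\nabla\varphi_l|^{2}|w|^{2}$ and $\varphi_l(1-\varphi_l)|\nabla w|^{2}$ terms which, combined with the $\varepsilon^{2}/\varepsilon^{N}$ weighting of the kinetic norm, yield a remainder of size $C\varepsilon^{2}\|w\|_{H^1}^{2}$, precisely the error term in the statement.

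Next, on each piece I would shift $y=x-a_l$ and apply $|\nabla(\mathcal{T}_l u_l)|^{2}=|\nabla u_l|^{2}+\tfrac{|v_l|^{2}}{4\varepsilon^{2}}|u_l|^{2}+\tfrac{v_l}{\varepsilon}\mathrm{Im}(\bar u_l\nabla u_l)$ together with the gauge-invariance of $F$, obtaining
\[
\mathcal{H}_{V,\varepsilon}(\mathcal{T}_l u_l)=\mathcal{E}_{\mu_l,\varepsilon}(u_l)+\mathcal{A}_l,\quad \mathcal{A}_l=\tfrac12\!\left(\tfrac{|v_l|^{2}}{4}-\mu_l\right)\!\|u_l\|_{L^2}^{2}+\tfrac12\!\int\!\tfrac{V_{\varepsilon,a_l}|u_l|^{2}}{\varepsilon^{N}}dy+\tfrac12\langle v_l\mathbf{i}u_l,\varepsilon\nabla u_l\rangle_\varepsilon.
\]
Combining with $\tfrac{d}{dt}\mathcal{H}_{V,\varepsilon}(\psi)=0$ reduces the problem to evaluating $-\sum_l\tfrac{d}{dt}\mathcal{A}_l$. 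Three auxiliary transport identities drive the simplification, each coming from treating $\mathcal{T}_l u_l=\tilde\varphi_l\psi$ as a solution of the NSE modulo commutators supported on the annuli: (i) the mass flux \eqref{3-9} integrated against $\varphi_l^{2}$ gives the almost-conservation $\dot M_l=\tfrac12\tfrac{d}{dt}\|u_l\|_{L^2}^{2}=O(\varepsilon\|w\|_{H^1}^{2})+O(e^{-c/\varepsilon})$; (ii) the potential-transport identity \eqref{3-8} applied to $\mathcal{T}_l u_l$ and then shifted yields $\tfrac{d}{dt}\tfrac12\!\int V_{\varepsilon,a_l}|u_l|^{2}/\varepsilon^{N}\,dy=\tfrac{\varepsilon v_l}{2}\!\int\!\nabla V_{\varepsilon,a_l}|u_l|^{2}/\varepsilon^{N}\,dy+\varepsilon\langle\nabla V_{\varepsilon,a_l}\mathbf{i}u_l,\varepsilon\nabla u_l\rangle_\varepsilon$; and (iii) the momentum-transport identity \eqref{3-10} applied to $\mathcal{T}_l u_l$ gives $\tfrac{d}{dt}\langle\mathbf{i}u_l,\varepsilon\nabla u_l\rangle_\varepsilon=-\dot v_l M_l-\varepsilon\!\int\!\nabla V_{\varepsilon,a_l}|u_l|^{2}/\varepsilon^{N}\,dy$. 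Substituting these into the chain-rule expansion of $\mathcal{A}_l$ produces two clean cancellations: the $v_l\dot v_l M_l$ terms cancel between the boost-energy and the momentum pieces, and the $\tfrac{\varepsilon v_l}{2}\!\int\!\nabla V_{\varepsilon,a_l}|u_l|^{2}$ terms cancel between the potential and the momentum pieces, leaving precisely $\tfrac12\dot\mu_l\|u_l\|_{L^2}^{2}-\langle(\tfrac{\dot v_l}{2}+\varepsilon\nabla V_{\varepsilon,a_l})\mathbf{i}u_l,\varepsilon\nabla u_l\rangle_\varepsilon$.

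The hard part will be the bookkeeping of the non-exponentially-small remainders. Since $\mathcal{T}_l u_l$ only solves the NSE modulo commutators $2\varepsilon^{2}\nabla\tilde\varphi_l\cdot\nabla\psi+\varepsilon^{2}\Delta\tilde\varphi_l\cdot\psi$ together with the nonlinearity defect $\tilde\varphi_l f(\psi)-f(\tilde\varphi_l\psi)$, and these live on the annuli where $\psi\approx w$ (and not on $e^{-c/\varepsilon}$-small tails alone), the transport identities above acquire correction terms that one must show fit inside either $O(e^{-c/\varepsilon})$ (whenever an exponentially decaying soliton tail is available) or $O(\varepsilon^{2}\|w\|_{H^1}^{2})$ (whenever only $w$ appears). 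Ensuring simultaneously that these corrections are absorbed into the claimed remainder and that the signs and coefficients of the $\dot v_l$, $\nabla V_{\varepsilon,a_l}$, and $\dot\mu_l$ contributions line up so that the intended cancellations actually materialize is the technical heart of the argument.
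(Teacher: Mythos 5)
Your proposal follows essentially the same route as the paper: it splits the conserved Hamiltonian into the per-soliton energies $\mathcal{E}_{\mu_l,\varepsilon}(u_l)$ plus boost/mass/momentum/potential corrections (the paper's identity \eqref{7-2}), differentiates in time using conservation of $\mathcal{H}_{V,\varepsilon}$ together with the transport identities \eqref{3-8}--\eqref{3-10}, and recovers the main terms from the same cancellations, with the cutoff commutators supported on the annuli absorbed into $C\varepsilon^2\|w\|_{H^1}^2+O(e^{-\frac{c}{\varepsilon}})$. The one point to tighten is your item (i), $\dot M_l=O(\varepsilon\|w\|_{H^1}^2)$: since this term is multiplied by $\mu_l=O(1)$ you actually need $O(\varepsilon^2\|w\|_{H^1}^2)$, which the paper obtains in \eqref{7-3} by one further integration by parts in the flux integral so that no derivative falls on $w$.
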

\begin{proof}
	According to the definition of $\mathcal{H}_{v,\varepsilon}$ in \eqref{3-4}, and the definition of $\mathcal{E}_{\mu_l,\varepsilon}(\eta_{\mu,\varepsilon})$ in \eqref{3-5}, we have
	\begin{equation}\label{7-2}
	\begin{split}
	2\mathcal{H}_{v,\varepsilon}&(\psi)=2\mathcal{H}_{v,\varepsilon}(\sum\limits_{l=1}^k\mathcal{T}_lu_l)\\
	&=2\sum\limits_{l=1}^k\mathcal{E}_{\mu_l,\varepsilon}(u_l)-\sum\limits_{l=1}^k(\frac{|v_l|^2}{4}+\mu_l)||\mathcal{T}_lu_l||_{L^2}+\sum\limits_{l=1}^kv_l\cdot\langle\mathbf{i}\mathcal{T}_lu_l, \varepsilon\nabla(\mathcal{T}_lu_l)\rangle_\varepsilon+\int_{\mathbb{R}^N}V_\varepsilon|\psi|^2dx
	\end{split}
	\end{equation}
    We caculate the derivative with respect to $t$ of each individual term in \eqref{7-2}. Since $\partial_t\big(||\psi||_{L^2}^2\big)=0$ and $|v_j|< K\varepsilon$ for $j=1,\cdots,k$, we use condition (C) in Section 2 and \eqref{3-9} to deduce that
    \begin{equation}\label{7-3}
    \begin{split}
    \sum\limits_{l=1}^k(\frac{|v_l|^2}{4}+\mu_l)\partial_t\big(||\mathcal{T}_lu_l||_{L^2}^2\big)&\le \sum\limits_{l=1}^k(\frac{(k\varepsilon)^2}{4}+C)\partial_t\big(||\mathcal{T}_lu_l||_{L^2}^2\big)\\
    &= C\varepsilon\mathbf{Re}\sum\limits_{l=1}^{k-1}\int_{B_{2L}(a_l(0))\setminus B_L(a_l(0))}\frac{\mathbf{i}}{\varepsilon^{N-1}}w\cdot\nabla w\cdot\nabla(\varphi_l^2)dx+O(e^{-\frac{c}{\varepsilon}})\\
    &\le C\varepsilon\sum\limits_{l=1}^{k-1}\int_{B_{2L}(a_l(0))\setminus B_L(a_l(0))}\frac{1}{\varepsilon^{N-1}}|w|^2|\nabla^2(\varphi_l^2)|dx+O(e^{-\frac{c}{\varepsilon}})\\
    &\le C\varepsilon^2||w||_{H^1}^2+O(e^{-\frac{c}{\varepsilon}}),
    \end{split}
    \end{equation}
    where \eqref{3-9} and the smooth truncation we have made in \eqref{6-1} have been used in obtaining the first equality. Similarly, by \eqref{3-10} we have
    \begin{equation}\label{7-4}
    \begin{split}
    \partial_t\langle\mathbf{i}\mathcal{T}_ju_j, \varepsilon\nabla(\mathcal{T}_ju_j)&\rangle_\varepsilon-\langle\varepsilon(\nabla V_\varepsilon)\mathcal{T}_ju_j,\mathcal{T}_ju_j\rangle_\varepsilon\\
    &\le C\varepsilon\int_{B_{2L}(a_j(0))\setminus B_L(a_j(0))}\frac{1}{\varepsilon^N}(|\varepsilon\nabla w|^2+|w|^2+|w|^p)||\varphi_j||_{C^3}^2dx\\
    &\ \ \ +C\varepsilon\int_{B_{2L}(a_j(0))\setminus B_L(a_j(0))}\frac{1}{\varepsilon^{N-2}}(|w|^2+|w||\nabla w|)||\varphi_j||_{C^3}^2dx+O(e^{-\frac{c}{\varepsilon}})\\
    &=C\varepsilon||w||_{H^1}^2+O(e^{-\frac{c}{\varepsilon}}), \ \ \ \ \ \ \ \ \ \ \ \ \ \ \  j\neq k,\\
    \partial_t\langle\mathbf{i}\mathcal{T}_ku_k, \varepsilon\nabla(\mathcal{T}_ku_k)&\rangle_\varepsilon-\langle\varepsilon(\nabla V_\varepsilon)\mathcal{T}_ku_k,\mathcal{T}_ju_k\rangle_\varepsilon\\
    &\le C\varepsilon\sum\limits_{l=1}^{k-1}\int_{B_{2L}(a_l(0))\setminus B_L(a_l(0))}\frac{1}{\varepsilon^N}(|\varepsilon\nabla w|^2+|w|^2+|w|^p)||\varphi_l||_{C^3}^2dx\\
    & \ \ \ +C\varepsilon\sum\limits_{l=1}^{k-1}\int_{B_{2L}(a_l(0))\setminus B_L(a_l(0))}\frac{1}{\varepsilon^{N-2}}(|w|^2+|w||\nabla w|)||\varphi_j||_{C^3}^2dx+O(e^{-\frac{c}{\varepsilon}})\\
    &=C\varepsilon||w||_{H^1}^2+O(e^{-\frac{c}{\varepsilon}}),\\
    \end{split}
    \end{equation}
    provided $||w||_{H^1}\le \delta$. Using \eqref{3-8}, \eqref{3-10}, \eqref{7-3}, \eqref{7-4} and $\partial_t\mathcal{H}_{V,\varepsilon}(\psi)=0$, and taking derivative with respect to $t$ of \eqref{7-2}, we obtain
    \begin{equation}\label{7-5}
    \begin{split}
    2\sum\limits_{l=1}^k\partial_t\mathcal{E}_{\mu_l,\varepsilon}(u_l)&=\sum\limits_{l=1}^k(\dot{v}_l\cdot\frac{v_l}{2}+\dot{\mu}_l)||\mathcal{T}_lu_l||_{L^2}^2-\sum\limits_{l=1}^k\dot{v}_l\cdot\langle\mathbf{i}\mathcal{T}_lu_l, \varepsilon\nabla(\mathcal{T}_lu_l)\rangle_\varepsilon-2\langle\varepsilon(\nabla V_\varepsilon)\mathbf{i}\psi,\varepsilon\nabla\psi\rangle_\varepsilon\\
    &+\sum\limits_{l=1}^kv_l\cdot\langle\varepsilon(\nabla V_\varepsilon)\mathcal{T}_lu_l,\mathcal{T}_lu_l\rangle_\varepsilon+C\varepsilon^2||w||_{H^1}^2,
    \end{split}
    \end{equation}
    where we have used the assumption $|v_l|< K\varepsilon$ for $l=1,\cdots,k$. By the definition of $\mathcal{T}_j$ and the smooth truncation \eqref{6-3}, we have
    $$2\langle\varepsilon(\nabla V_\varepsilon)\mathbf{i}\psi,\varepsilon\nabla\psi\rangle_\varepsilon=\sum\limits_{l=1}^k\langle\varepsilon(\nabla V_\varepsilon)\mathbf{i}\mathcal{T}_lu_l,\varepsilon\nabla(\mathcal{T}_lu_l)\rangle_\varepsilon+C\varepsilon\cdot\varepsilon_v\sum\limits_{l=1}^k||w_l||_{H^1}^2+O(e^{-\frac{c}{\varepsilon}}).$$
    Notice that according to the left side of \eqref{6-7}, we have
    $$ \sum\limits_{l=1}^k||w_l||_{H^1}^2\le C||w||_{H^1}^2,$$
    with $C$ a positive constant, and since $\varepsilon_v=\varepsilon^h$ with $h>2$, we can collect terms of the form $\dot{v_j}+2\varepsilon\nabla V_{\varepsilon,a_j}$ to give
    \begin{equation*}
    \begin{split}
    2\sum\limits_{l=1}^k\partial_t\mathcal{E}_{\mu_l,\varepsilon}(u_l)=\sum\limits_{l=1}^k\dot{\mu}_l||u_l||_{L^2}^2&-\sum\limits_{l=1}^k\langle(\dot v_l+2\varepsilon\nabla V_{\varepsilon,a_l})\mathbf{i}u_l,\varepsilon\nabla u_l\rangle_\varepsilon\\
    &+C\varepsilon^2||w||_{H^1}^2+O(e^{-\frac{c}{\varepsilon}}),
    \end{split}
    \end{equation*}
    which is the desired result.
\end{proof}

By now we can state the main result of this section, which claims that the derivative with respect to $t$ of energy difference $\sum\limits_{l=1}^k\mathcal{E}_{\mu_l,\varepsilon}(u_l)-\sum\limits_{l=1}^k\mathcal{E}_{\mu_l}(\eta_{\mu_l,\varepsilon})$ is small and can be bounded by $|\beta|_\infty$ and the $H^1$ norm of $w$.
\begin{proposition}\label{prop7-2}
	It holds
	\begin{equation}\label{7-6}
	\sum\limits_{l=1}^k\partial_t\mathcal{E}_{\mu_l,\varepsilon}(u_l)-\sum\limits_{l=1}^k\partial_t\mathcal{E}_{\mu_l}(\eta_{\mu_l,\varepsilon})=O\big(|\beta|_\infty||w||_{H^1}^2+\varepsilon^2||w||_{H^1}^2+\varepsilon\cdot\varepsilon_v^2||w||_{H^1}+e^{-\frac{c}{\varepsilon}}\big).
	\end{equation}
\end{proposition}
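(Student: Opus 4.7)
The plan is to compute $\sum_{l=1}^{k}\partial_t\mathcal{E}_{\mu_l,\varepsilon}(\eta_{\mu_l,\varepsilon})$ explicitly, subtract it from Lemma \ref{lem7-1}, and then estimate the surviving pieces. Since $\eta_{\mu_l,\varepsilon}$ is a critical point of $\mathcal{E}_{\mu_l,\varepsilon}$, the chain rule reduces this to the explicit $\mu$-derivative of \eqref{3-12}, giving $\partial_t\mathcal{E}_{\mu_l,\varepsilon}(\eta_{\mu_l,\varepsilon})=\tfrac{1}{2}\dot\mu_l\|\eta_{\mu_l,\varepsilon}\|_{L^2}^2$. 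Subtracting, the quantity to be bounded becomes, modulo the $O(\varepsilon^{2}\|w\|_{H^{1}}^{2}+e^{-c/\varepsilon})$ error already absorbed in Lemma \ref{lem7-1}, a mass-defect piece $\tfrac{1}{2}\sum_{l}\dot\mu_l(\|u_l\|_{L^2}^2-\|\eta_{\mu_l,\varepsilon}\|_{L^2}^2)$ plus a momentum-type piece $-\sum_{l}\langle(\tfrac{\dot v_l}{2}+\varepsilon\nabla V_{\varepsilon,a_l})\mathbf{i}u_l,\varepsilon\nabla u_l\rangle_\varepsilon$.

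For the mass-defect piece, expanding $u_{l}=\eta_{\mu_{l},\varepsilon}+w_{l}+O(e^{-c/\varepsilon})$ gives $\|u_l\|_{L^2}^2-\|\eta_{\mu_l,\varepsilon}\|_{L^2}^2=2\langle w_l,\eta_{\mu_l,\varepsilon}\rangle_\varepsilon+\|w_l\|_{L^2}^2+O(e^{-c/\varepsilon})$. The skew orthogonal condition \eqref{5-9} applied to the phase mode $z_{l,2N+1}=\mathcal{T}_l(\mathbf{i}\eta_{\mu_l,\varepsilon})$, combined with the truncation relation $\mathcal{T}_l w_l=\varphi_l w+O(e^{-c/\varepsilon})$ and the canonical property \eqref{4-22} of $\mathcal{T}_l$, forces $\langle w_l,\eta_{\mu_l,\varepsilon}\rangle_\varepsilon=O(e^{-c/\varepsilon})$. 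Together with $|\dot\mu_l|\le|\beta|_\infty$ and the comparability \eqref{6-7} between $\sum_l\|w_l\|_{L^2}^2$ and $\|w\|_{H^1}^2$, this contributes $O(|\beta|_\infty\|w\|_{H^1}^2+e^{-c/\varepsilon})$.

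For the momentum-type piece I would split $\tfrac{\dot v_l}{2}+\varepsilon\nabla V_{\varepsilon,a_l}(x)=\beta_l^{(v)}+\bigl(\varepsilon\nabla V_{\varepsilon,a_l}(x)-\varepsilon\nabla V_\varepsilon(a_l)\bigr)$, where $\beta_l^{(v)}$ collects the $v$-components of $\beta_l$; by Taylor's theorem and assumption (A) the remainder satisfies the pointwise bound $|\varepsilon\nabla V_{\varepsilon,a_l}(x)-\varepsilon\nabla V_\varepsilon(a_l)|\le C\varepsilon\varepsilon_v^2|x|$. Expanding $u_l=\eta_{\mu_l,\varepsilon}+w_l+O(e^{-c/\varepsilon})$, I handle the pieces separately: the $\eta\cdot\eta$ contributions vanish because $\eta_{\mu_l,\varepsilon}$ is real and the integrand is pure imaginary; the $\eta\cdot w$ cross terms driven by $\beta_l^{(v)}$ collapse after integration by parts to $-2\varepsilon\beta_{l,m}^{(v)}\int\varepsilon^{-N}(\partial_m\eta_{\mu_l,\varepsilon})w_l^{I}dx$, which is $O(|\beta|_\infty e^{-c/\varepsilon})$ by the skew orthogonality against the translation modes $z_{l,1},\dots,z_{l,N}$; the $\eta\cdot w$ cross terms driven by the Taylor remainder are bounded via Cauchy--Schwarz by $C\varepsilon\varepsilon_v^2\| |x|\eta_{\mu_l,\varepsilon}\|_{L^2}\|w_l\|_{H^1}=O(\varepsilon^2\varepsilon_v^2\|w\|_{H^1})\le O(\varepsilon\varepsilon_v^2\|w\|_{H^1})$ thanks to the weighted estimate $\| |x|\eta_{\mu_l,\varepsilon}\|_{L^2}=O(\varepsilon)$ following from the exponential decay \eqref{1-8}; and the $w\cdot w$ pieces give $O(|\beta|_\infty\|w\|_{H^1}^2)$ (from $\beta_l^{(v)}$) and $O(\varepsilon\varepsilon_v\|w\|_{H^1}^2)\le O(\varepsilon^2\|w\|_{H^1}^2)$ (from the remainder, using the uniform bound $|\nabla V_\varepsilon|\le C\varepsilon_v$ together with $\varepsilon_v=\varepsilon^h$, $h>2$).

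The main obstacle is that $u_k$ is not compactly supported, so a naive weighted norm $\| |x|u_l\|_{L^2}$ is not available. The key observation is that the $|x|$ weight from the Taylor remainder only needs to act on $\eta_{\mu_l,\varepsilon}$ in the linear-in-$w$ terms, where exponential concentration at scale $\varepsilon$ supplies an extra factor of $\varepsilon$; the quadratic-in-$w$ terms can instead be absorbed using only the uniform bound on $\nabla V_\varepsilon$, so that a weighted norm of the perturbation itself is never required.
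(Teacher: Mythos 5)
Your proposal is correct and follows essentially the same route as the paper: identify $\partial_t\mathcal{E}_{\mu_l,\varepsilon}(\eta_{\mu_l,\varepsilon})=\tfrac12\dot\mu_l\|\eta_{\mu_l,\varepsilon}\|_{L^2}^2$, split the remainder from Lemma \ref{lem7-1} into the mass-defect and momentum-type pieces, kill the linear-in-$w$ terms via the skew orthogonality conditions (the paper's \eqref{7-11}, \eqref{7-13}, \eqref{7-14}), and Taylor-expand $\nabla V_{\varepsilon,a_l}$ about $a_l$ using the weighted integrability of $|x|\eta_{\mu_l,\varepsilon}$ together with $\varepsilon_v=\varepsilon^h$, $h>2$. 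Your observation that $\||x|\eta_{\mu_l,\varepsilon}\|_{L^2}=O(\varepsilon)$ in the scaled norm is a slight sharpening the paper does not state, but it is not needed for the claimed bound.
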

\begin{proof}
	Since $\eta_{\mu_j,\varepsilon}$ is a critical point of $\mathcal{E}_{\mu_j,\varepsilon}$, we have
	\begin{equation}\label{7-7}
	\partial_t\mathcal{E}(\eta_{\mu_j,\varepsilon})=\frac{1}{2}\dot{\mu}_j||\eta_{\mu_j,\varepsilon}||^2_{L^2}.
	\end{equation}
	From Lemma \ref{lem7-1}, we can devide $\sum\limits_{l=1}^k\partial_t\mathcal{E}_{\mu_l,\varepsilon}(u_l)-\sum\limits_{l=1}^k\partial_t\mathcal{E}_{\mu_l}(\eta_{\mu_l,\varepsilon})$ into the following three parts:
	\begin{equation}\label{7-8}
	I_1=\frac{1}{2}\sum\limits_{l=1}^k\dot{\mu}_l(||u_l||_{L^2}^2-||\eta_{\mu_l,\varepsilon}||_{L^2}^2),
	\end{equation}
	\begin{equation}\label{7-9}
	I_2=-\sum\limits_{l=1}^k\langle(\frac{\dot v_l}{2}+\varepsilon\nabla V_{\varepsilon,a_l})\mathbf{i}u_l,\varepsilon\nabla u_l\rangle_\varepsilon,
	\end{equation}
	\begin{equation}\label{7-10}
	I_3=C\varepsilon^2\sum\limits_{l=1}^k||w_l||_{H^1}^2+O(e^{-\frac{c}{\varepsilon}}).
	\end{equation}
	Using the decomposition $\psi=\sum\limits_{l=1}^{k}\mathcal{T}_l\eta_{\mu_l,\varepsilon}+w$ to find $0=\langle\mathcal{T}_j\eta_{\mu_j,\varepsilon}, w\rangle_\varepsilon=\langle\eta_{\mu_j,\varepsilon}, \mathcal{T}_j^{-1}w\rangle_\varepsilon$, and by the smooth truncation \eqref{6-1}, we have
	\begin{equation}\label{7-11}
	\langle \eta_{\mu_j,\varepsilon}, w_j\rangle_\varepsilon=O(e^{-\frac{c}{\varepsilon}}).
	\end{equation}
	Recall $\dot{\mu}_j=\beta_{j,2N+2}$. Hence for the first part $I_1$, it holds
	\begin{equation}\label{7-12}
	I_1=\frac{1}{2}\sum\limits_{l=1}^k\dot{\mu}_l||w_l||_{L^2}^2+O(e^{-\frac{c}{\varepsilon}})=O(|\beta|_\infty\sum\limits_{l=1}^k||w_l||_{H^1}^2+e^{-\frac{c}{\varepsilon}}).
	\end{equation}
	Similar to \eqref{7-11}, we can prove
	\begin{equation}\label{7-13}
	\langle\mathbf{i}\eta_{\mu_j,\varepsilon}, \varepsilon\nabla w_j\rangle_\varepsilon=O(e^{-\frac{c}{\varepsilon}})
	\end{equation}
	and
	\begin{equation}\label{7-14}
	\langle\mathbf{i}\varepsilon\nabla w_j, \eta_{\mu_j,\varepsilon}\rangle_\varepsilon=O(e^{-\frac{c}{\varepsilon}}).
	\end{equation}
	Notice that $\nabla V_{\varepsilon,a}-\nabla V_\varepsilon(a)=O(\varepsilon_v^2|x|)$ and $|x|\eta_{\mu_j,\varepsilon},\varepsilon|x|\nabla\eta_{\mu_j,\varepsilon}\in L^2$ for $j=1,\cdots,k$. Combining \eqref{7-13} \eqref{7-14} with $\frac{\dot v_{j,m}}{2}+\varepsilon\partial_{x_m}V_{\varepsilon,a_j}=\beta_{j,N+m}$, we can estimate the second part $I_2$ as follows:
	\begin{equation}\label{7-15}
	\begin{split}
	I_2&=-\sum\limits_{l=1}^k\langle(\frac{\dot v_l}{2}+\varepsilon\nabla V_{\varepsilon,a_l})\mathbf{i}w_l,\varepsilon\nabla w_l\rangle_\varepsilon-\sum\limits_{l=1}^k\langle\varepsilon(\nabla V_{\varepsilon,a_l})\mathbf{i}\eta_{\mu_l,\varepsilon},\varepsilon\nabla w_l\rangle_\varepsilon\\
	& \ \ \ -\sum\limits_{l=1}^k\langle\varepsilon(\nabla V_{\varepsilon,a_l})\mathbf{i}\varepsilon\nabla w_l, \eta_{\mu_l,\varepsilon}\rangle_\varepsilon+O(e^{-\frac{c}{\varepsilon}})\\
	&=-\sum\limits_{l=1}^k\langle(\frac{\dot v_l}{2}+\varepsilon\nabla V_\varepsilon(a_l))\mathbf{i}w_l,\varepsilon\nabla w_l\rangle_\varepsilon-\sum\limits_{l=1}^k\langle\varepsilon(\nabla V_{\varepsilon,a_l}-\nabla V_\varepsilon(a_l))\mathrm{i}w_l,\varepsilon\nabla w_l\rangle_\varepsilon\\
	& \ \ \ -\sum\limits_{l=1}^k\langle\varepsilon(\nabla V_{\varepsilon,a_l}-\nabla V_\varepsilon(a_l))\mathbf{i}\eta_{\mu_l,\varepsilon}, \varepsilon\nabla w_l\rangle_\varepsilon-\sum\limits_{l=1}^k\langle\varepsilon(\nabla V_{\varepsilon,a_l}-\nabla V_\varepsilon(a_l))\mathbf{i}\varepsilon\nabla w_l, \eta_{\mu_l,\varepsilon}\rangle_\varepsilon+O(e^{-\frac{c}{\varepsilon}})\\
	&=O\big(|\beta|_\infty\sum\limits_{l=1}^k||w_l||_{H^1}^2+\varepsilon\cdot\varepsilon_v\sum\limits_{l=1}^k||w_l||_{H^1}^2+\varepsilon\cdot\varepsilon_v^2\sum\limits_{l=1}^k||w_l||_{H^1}+e^{-\frac{c}{\varepsilon}}\big)
	\end{split}
	\end{equation}
   From condition (A) in Section 2, we have $\varepsilon_v=\varepsilon^h$ for some $h>2$. Thus we complete our proof by adding up \eqref{7-10} \eqref{7-12} \eqref{7-15}, and using the fact that
   $$ \sum\limits_{l=1}^k||w_l||_{H^1}^2\le C||w||_{H^1}^2$$
   for some positive constant $C$ from \eqref{6-7}.
\end{proof}

\section{Proof of the main results}

We start with a well-known lemma about coercivity of $\mathcal{L}_{\eta_{\mu,\varepsilon}}$, whose proof can be found in \cite{Fr3} and \cite{We2}. It relies on the fact that $\mathcal{L}_{\eta_{\mu,\varepsilon}}$ has only one nagetive eigenvalue and the structure of its null space.
\begin{lemma}\label{lem8-1}
	Suppose that $w\in H^1(\mathbb{R}^N,\mathbb{C})$ satisfying $\omega_\varepsilon(w,z)=0$ for all $z \in\mathbf{T}_{\Psi_{\sigma_0,\varepsilon}}\mathbf{M}$ with $\sigma_0=(0,0,0,\mu)$ and $\mu\in I$, then there exists a constant $\rho>0$ such that
	\begin{equation}\label{8-1}
	\langle\mathcal{L}_{\eta_{\mu,\varepsilon}}w,w\rangle_\varepsilon\ge \rho||w||_{H^1}^2
	\end{equation}
	where the linearized operator $\mathcal{L}_{\eta_{\mu,\varepsilon}}$ is defined in \eqref{4-1}.
\end{lemma}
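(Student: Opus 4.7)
The plan is to reduce the coercivity inequality to separate real-variable estimates via the natural real/imaginary decomposition. Writing $w = w_1 + \mathbf{i}w_2$ with $w_1,w_2 \in H^1(\mathbb{R}^N,\mathbb{R})$, I would first use that $f(s)=\lambda|s|^{p-1}s$ linearizes diagonally about the real function $\eta_{\mu,\varepsilon}$: setting $L_+ := -\varepsilon^2\Delta + \mu - \lambda p\,\eta_{\mu,\varepsilon}^{p-1}$ and $L_- := -\varepsilon^2\Delta + \mu - \lambda\,\eta_{\mu,\varepsilon}^{p-1}$, a direct computation gives
\[
\langle \mathcal{L}_{\eta_{\mu,\varepsilon}} w, w\rangle_\varepsilon = \langle L_+ w_1, w_1\rangle_\varepsilon + \langle L_- w_2, w_2\rangle_\varepsilon,
\]
so it suffices to prove coercivity of $L_\pm$ on the appropriate real subspaces. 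Using $\omega_\varepsilon(u,v)=\mathrm{Im}\int\varepsilon^{-N}u\bar v\,dx$ and unpacking the skew-orthogonality against the four generators $z_t,z_b,z_g,z_s$ from Section 4, the hypothesis translates into the four groups of real $L^2$ orthogonalities
\[
w_1\perp\eta_{\mu,\varepsilon},\qquad w_1\perp x_i\eta_{\mu,\varepsilon},\qquad w_2\perp\partial_i\eta_{\mu,\varepsilon}\ (i=1,\dots,N),\qquad w_2\perp\partial_\mu\eta_{\mu,\varepsilon}.
\]

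Next I would invoke the standard spectral picture for $L_\pm$ from \cite{Fr3,We1,We2,Kw}: $L_-\ge 0$ with $\ker L_- = \mathrm{span}\{\eta_{\mu,\varepsilon}\}$, hence $L_-$ is $H^1$-coercive on the $L^2$-orthogonal complement of $\eta_{\mu,\varepsilon}$; and $L_+$ has one negative eigenvalue with $\ker L_+ = \mathrm{span}\{\partial_i\eta_{\mu,\varepsilon}\}_{i=1}^N$, and is $H^1$-coercive on the $L^2$-complement of $\mathrm{span}\{\eta_{\mu,\varepsilon}\}\cup \ker L_+$ — this is the classical Weinstein inequality, whose validity in the present setting rests on the subcritical condition $p<1+4/N$ already built into assumption (B).

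The technical heart is to bridge the mismatch between the orthogonalities one has ($w_1\perp x_i\eta$ and $w_2\perp\partial_\mu\eta$) and the ones demanded by the classical inequalities ($w_1\perp\partial_i\eta$ and $w_2\perp\eta$). I would proceed by projection. Decompose $w_1 = \sum_i c_i\,\partial_i\eta_{\mu,\varepsilon} + w_1^\perp$ with $w_1^\perp \perp \eta,\partial_i\eta$; integration by parts gives $\langle \partial_i\eta_{\mu,\varepsilon}, x_j\eta_{\mu,\varepsilon}\rangle_\varepsilon = -\tfrac12\delta_{ij}\|\eta_{\mu,\varepsilon}\|_{L^2}^2$, so the constraint $w_1\perp x_j\eta_{\mu,\varepsilon}$ determines $c_j$ linearly in $w_1^\perp$ with $|c_j|\le C\|w_1^\perp\|_{L^2}$, whence $\|w_1\|_{H^1}\le C'\|w_1^\perp\|_{H^1}$. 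Analogously, write $w_2 = \alpha\,\eta_{\mu,\varepsilon} + w_2^\perp$ with $w_2^\perp\perp\eta_{\mu,\varepsilon}$; the condition $w_2\perp\partial_\mu\eta_{\mu,\varepsilon}$ together with $\langle\eta_{\mu,\varepsilon},\partial_\mu\eta_{\mu,\varepsilon}\rangle_\varepsilon = m'(\mu)>0$ (which is strictly positive on $I$ by Lemma~\ref{lem4-1}) pins down $\alpha$ with $|\alpha|\le C\|w_2^\perp\|_{L^2}$, giving $\|w_2\|_{H^1}\le C'\|w_2^\perp\|_{H^1}$. Because $L_+\partial_i\eta_{\mu,\varepsilon}=0$ and $L_-\eta_{\mu,\varepsilon}=0$ and $L_\pm$ are self-adjoint, one has $\langle L_+w_1,w_1\rangle_\varepsilon = \langle L_+w_1^\perp,w_1^\perp\rangle_\varepsilon$ and $\langle L_-w_2,w_2\rangle_\varepsilon = \langle L_-w_2^\perp,w_2^\perp\rangle_\varepsilon$, so the known coercivities apply to the projected pieces and combine with the previous $H^1$-bounds to yield $\langle \mathcal{L}_{\eta_{\mu,\varepsilon}}w,w\rangle_\varepsilon \ge \rho\,\|w\|_{H^1}^2$.

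The main obstacle I anticipate is ensuring the coercivity constant $\rho$ is uniform for $\mu$ ranging over the interval $I$. Pointwise in $\mu$ every ingredient (the spectral gap of $L_-$, the Weinstein gap for $L_+$, and $m'(\mu)$) is strictly positive; smoothness of $\eta_{\mu,\varepsilon}$ in $\mu$ together with the compactness of the closure of $I\subset(\mu_{\inf},\mu_{\sup})$ — or, alternatively, the explicit scaling $\eta_{\mu,\varepsilon}(x) = \mu^{1/(p-1)}\eta_{1,\varepsilon}(\sqrt{\mu}\,x)$ which reduces everything to $\mu=1$ — then promotes these pointwise bounds to a uniform $\rho>0$.
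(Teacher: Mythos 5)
Your argument is correct, and it is essentially the proof the paper is implicitly relying on: the paper does not prove Lemma \ref{lem8-1} but defers it to \cite{Fr3} and \cite{We2}, and your route --- diagonalizing $\mathcal{L}_{\eta_{\mu,\varepsilon}}$ into $L_\pm$, translating the symplectic orthogonality against $z_t,z_b,z_g,z_s$ into the real conditions $w_1\perp\eta,\,x_j\eta$ and $w_2\perp\partial_j\eta,\,\partial_\mu\eta$, and then bridging to the classical Weinstein orthogonalities by projecting off $\ker L_+$ and $\ker L_-$ using $\langle\partial_i\eta,x_j\eta\rangle_\varepsilon=-\tfrac12\delta_{ij}\|\eta\|_{L^2}^2$ and $\langle\eta,\partial_\mu\eta\rangle_\varepsilon=m'(\mu)>0$ --- is exactly the standard argument in those references. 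The only point worth adding is that $\rho$ must also be uniform in $\varepsilon$, which follows from the scaling $\eta_{\mu,\varepsilon}(x)=\eta_{\mu,1}(x/\varepsilon)$ together with the $\varepsilon$-weighted norms \eqref{1-4} and \eqref{3-2}, by the same reduction you use for uniformity in $\mu$.
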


Unfortunately, Lemma \ref{lem8-1} can not be applied directly for this multi-soliton case. We need to establish its counterpart. 
\begin{lemma}\label{lem8-2}
	For $u_j$ and $w_j$ defined in \eqref{6-1} \eqref{6-2} \eqref{6-5} \eqref{6-6}, it holds
	\begin{equation}\label{8-2}
	\mathcal{E}_{\mu_j,\varepsilon}(u_j)-\mathcal{E}_{\mu_j,\varepsilon}(\eta_{\mu,\varepsilon})\ge \frac{\rho_j}{2}||w_j||_{H^1}^2-C||w_j||_{H^1}^3+O(e^{-\frac{c}{\varepsilon}}) \ \ \ for \ \ j=1,\cdots,k,
	\end{equation}
	where $\mathcal{E}_{\mu,\varepsilon}$ is defined in \eqref{3-12}, and $\rho_j,c$ are positive constants.
\end{lemma}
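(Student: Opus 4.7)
The plan is a Taylor expansion of $\mathcal{E}_{\mu_j,\varepsilon}$ around the ground state $\eta_{\mu_j,\varepsilon}$, combined with the coercivity of $\mathcal{L}_{\eta_{\mu_j,\varepsilon}}$ provided by Lemma \ref{lem8-1}. From \eqref{6-8}--\eqref{6-9} and the decay bound \eqref{1-8}, one checks that
$$ \|u_j - (\eta_{\mu_j,\varepsilon} + w_j)\|_{H^1} = O(e^{-c/\varepsilon}), $$
since the discrepancy either equals $-\eta_{\mu_j,\varepsilon}$ on a region at distance at least $L$ from the $j$-th center, or is generated by tails of the other solitons whose centers are separated by at least $5L$ thanks to assumption (C). Because the kinetic and mass pieces of $\mathcal{E}_{\mu_j,\varepsilon}$ are quadratic and the remainder for $F$ is controlled by \eqref{3-16}, Taylor expansion together with $\mathcal{E}'_{\mu_j,\varepsilon}(\eta_{\mu_j,\varepsilon})=0$ would give
$$ \mathcal{E}_{\mu_j,\varepsilon}(u_j) - \mathcal{E}_{\mu_j,\varepsilon}(\eta_{\mu_j,\varepsilon}) = \tfrac{1}{2}\langle \mathcal{L}_{\eta_{\mu_j,\varepsilon}} w_j, w_j\rangle_\varepsilon + O\!\left(\|w_j\|_{H^1}^3 + e^{-c/\varepsilon}\right). $$

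The next step is to obtain a coercive lower bound on the quadratic form. Lemma \ref{lem8-1} requires $w_j$ to be exactly skew-orthogonal to $\mathbf{T}_{\Psi_{\sigma_{0,j},\varepsilon}}\mathbf{M}$ with $\sigma_{0,j}=(0,0,0,\mu_j)$, which is not literally satisfied. However, Proposition \ref{prop5-2} yields $\omega_\varepsilon(w, \mathcal{T}_j\bar z)=0$ for every generator $\bar z$ of that tangent space, and the canonical identity \eqref{4-22} rewrites this as $\omega_\varepsilon(\mathcal{T}_j^{-1}w,\bar z)=0$. Now $w_j$ differs from $\mathcal{T}_j^{-1}w$ by (i) multiplication by the translated cutoff $\varphi_j(\cdot + a_j)$ and (ii) the translated tails of the other $k-1$ solitons. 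Since every basis vector $\bar z \in \{\mathbf{i}\eta_{\mu_j,\varepsilon},\, -\varepsilon\nabla\eta_{\mu_j,\varepsilon},\, \tfrac{\mathbf{i}x}{\varepsilon}\eta_{\mu_j,\varepsilon},\, \partial_{\mu_j}\eta_{\mu_j,\varepsilon}\}$ inherits the exponential decay \eqref{1-8} (the polynomial factor in the boost generator being absorbed into the decay over $|x|\gtrsim L$), one obtains
$$ \omega_\varepsilon(w_j, \bar z) = O(e^{-c/\varepsilon}) \qquad \text{for all } \bar z \in \mathbf{T}_{\Psi_{\sigma_{0,j},\varepsilon}}\mathbf{M}. $$
Since $\Omega_{\eta_{\mu_j,\varepsilon}}$ is non-degenerate by Lemma \ref{lem4-1}, I would decompose $w_j = w_j^\perp + w_j^\parallel$ with $w_j^\parallel \in \mathbf{T}_{\Psi_{\sigma_{0,j},\varepsilon}}\mathbf{M}$ satisfying $\|w_j^\parallel\|_{H^1}=O(e^{-c/\varepsilon})$ and $w_j^\perp$ exactly skew-orthogonal. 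Applying Lemma \ref{lem8-1} to $w_j^\perp$ and expanding the bilinear form then yields
$$ \langle \mathcal{L}_{\eta_{\mu_j,\varepsilon}} w_j, w_j\rangle_\varepsilon \;\ge\; \rho_j\|w_j\|_{H^1}^2 + O(e^{-c/\varepsilon}), $$
for a constant $\rho_j>0$ slightly smaller than the $\rho$ of Lemma \ref{lem8-1}. Substituting this back into the Taylor expansion completes the proof.

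The main obstacle is the second step: one has to bookkeep carefully the exponential errors coming from the truncation $\varphi_j$ and from each of the $k-1$ foreign soliton tails when testing against the $2N+2$ generators $\bar z$, and to absorb the linearly growing prefactor in the boost generator $\tfrac{\mathbf{i}x}{\varepsilon}\eta_{\mu_j,\varepsilon}$ into the exponential decay \eqref{1-8}. These estimates depend crucially on assumption (C), which enforces a minimal $6L$-separation of the initial soliton centers and thereby keeps the centers $a_i, a_j$ separated by at least $5L$ throughout the time interval under consideration.
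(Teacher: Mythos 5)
Your proposal is correct and follows essentially the same route as the paper: Taylor-expand $\mathcal{E}_{\mu_j,\varepsilon}$ at the critical point $\eta_{\mu_j,\varepsilon}$, show via the truncation and the exponential decay of the generators that the perturbation is skew-orthogonal to $\mathbf{T}_{\Psi_{\sigma_{0,j},\varepsilon}}\mathbf{M}$ up to $O(e^{-c/\varepsilon})$, split off the exponentially small tangential component, and apply the coercivity of Lemma \ref{lem8-1} to the exactly skew-orthogonal part. The paper phrases this with $\tilde{w}_j=u_j-\eta_{\mu_j,\varepsilon}$ and the decomposition $\tilde{w}_j=\hat{w}_j+\sum_m r_{j,m}z_{j,m}$, which is the same as your $w_j=w_j^{\perp}+w_j^{\parallel}$ up to an $O(e^{-c/\varepsilon})$ relabeling.
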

\begin{proof}
	We will use Lemma \ref{lem8-1}. Define $\tilde{w}_j=u_j-\eta_{\mu_j,\varepsilon}$, then by \eqref{6-8} and \eqref{6-9} we have
	\begin{equation}\label{8-3}
	||\tilde{w}_j-w_j||_{H^1}=O(e^{-\frac{c}{\varepsilon}}) \ \ \ for \ \ j=1,\cdots,k.
	\end{equation}
	Moreover, since $\omega_\varepsilon(w,\mathcal{T}_jz)=\omega_\varepsilon(\mathcal{T}_j^{-1}w,z)=0, \ \ \forall z \in\mathbf{T}_{\Psi_{\sigma_{0,j},\varepsilon}}\mathbf{M}$ with $\sigma_{0,j}=(0,0,0,\mu_j)$, it holds
	\begin{equation}\label{8-4}
	\omega_\varepsilon(\tilde{w}_j,z)=O(e^{-\frac{c}{\varepsilon}}) \ \ \ \forall z \in\mathbf{T}_{\Psi_{\sigma_{0,j},\varepsilon}}\mathbf{M}.
	\end{equation} 
	Thus we can write $\tilde{w}_j$ as
	\begin{equation}\label{8-5}
	\tilde{w}_j=\hat{w}_j+\sum\limits_{m=1}^{2N+2}r_{j,m}z_{j,m},
	\end{equation}
	where $\hat{w}\in H^1(\mathbb{R}^N,\mathbb{C})$ is the function such that 
	$$\omega_\varepsilon(\hat{w}_j,z)=0, \ \ \forall z \in\mathbf{T}_{\Psi_{\sigma_{0,j},\varepsilon}}\mathbf{M},$$ 
	and $|r_{j,m}|=O(e^{-\frac{c}{\varepsilon}})$ for $m=1,\cdots,2N+2$. By \eqref{8-1} \eqref{8-3} \eqref{8-5}, and using the fact that $\eta_{\mu_j,\varepsilon}$ is a radial ground state solution, we have
	\begin{equation}\label{8-6}
	\begin{split}
	2\mathcal{E}_{\mu_j,\varepsilon}(u_j)-2\mathcal{E}_{\mu_j,\varepsilon}(\eta_{\mu_j,\varepsilon})&=\langle\mathcal{L}_{\eta_{\mu_j,\varepsilon}}\tilde{w}_j,\tilde{w}_j\rangle_\varepsilon-C||\tilde{w}_j||_{H^1}^3\\
	&=\langle\mathcal{L}_{\eta_{\mu_j,\varepsilon}}\hat{w}_j,\hat{w}_j\rangle_\varepsilon-C||\tilde{w}_j||_{H^1}^3+O(e^{-\frac{c}{\varepsilon}})\\
	&\ge \hat{\rho}||\hat{w}_j||_{H^1}^2-C||w_j||_{H^1}^3+O(e^{-\frac{c}{\varepsilon}})\\
	&\ge\rho||w_j||_{H^1}^2-C||w_j||_{H^1}^3+O(e^{-\frac{c}{\varepsilon}}).
	\end{split}
	\end{equation}
	Hence we have proved \eqref{8-2}.
\end{proof}

To study the change of $H^1$ norm for the perturbation term $w$, let us define the norm
\begin{equation}\label{8-7}
\big|||w||\big|_t=\sup_{s\in(0,t]}||w(s)||_{H^1}
\end{equation}
We are going to prove $\big|||w||\big|_t\le c\varepsilon_v$ for $t=\min\{\frac{T_0}{\varepsilon^2},\frac{L}{K\varepsilon}\}$ with $T_0$ some positive constant dependent on $\rho_0:=\min\{\rho_1,\cdots,\rho_k\}$.
\begin{proposition}\label{prop8-3}
	There are positive constants $T_0,c,c'$ independent of $\varepsilon$ and $\varepsilon_v$ such that for $0<t\le \min\{\frac{T_0}{\varepsilon^2},\frac{L}{K\varepsilon}\}$, it holds
	\begin{equation}\label{8-8}
	||w||_{H^1}\le c\varepsilon_v
	\end{equation}
	and
	\begin{equation}\label{8-9}
    |\beta|_\infty\le c'\varepsilon_v^2,
	\end{equation}
	where $\beta$ is defined in \eqref{6-10}-\eqref{6-13} and $|\beta|_\infty=\max|\beta_{j,m}|$.
\end{proposition}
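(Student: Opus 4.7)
The plan is to run a standard bootstrap (continuity) argument using the Lyapunov functional
\[
\mathcal{L}(t) := \sum_{l=1}^k \bigl[\mathcal{E}_{\mu_l(t),\varepsilon}(u_l(t)) - \mathcal{E}_{\mu_l(t),\varepsilon}(\eta_{\mu_l(t),\varepsilon})\bigr].
\]
The two inputs I would feed in are: (i) the coercivity estimate of Lemma \ref{lem8-2}, which lets $\mathcal{L}(t)$ control $\sum_l \|w_l(t)\|_{H^1}^2$ from below, and hence $\|w(t)\|_{H^1}^2$ via the right half of \eqref{6-7}; and (ii) the time-derivative estimate of Proposition \ref{prop7-2}, which bounds $\dot{\mathcal{L}}$ in terms of $|\beta|_\infty$, $\varepsilon$, $\varepsilon_v$ and $\|w\|_{H^1}$. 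A preliminary observation closes the loop between \eqref{8-8} and \eqref{8-9}: comparing \eqref{6-39} with \eqref{6-42} gives $|\beta|_\infty \le |X_\delta|_\infty + O(e^{-c/\varepsilon})$, and then \eqref{6-44} yields
\[
|\beta|_\infty \le C|\beta|_\infty \|w\|_{H^1} + C\varepsilon_v^2 + C\|w\|_{H^1}^2 + O(e^{-c/\varepsilon}).
\]
For $\|w\|_{H^1}$ small the first term is absorbed, so $|\beta|_\infty = O(\varepsilon_v^2 + \|w\|_{H^1}^2)$; in particular \eqref{8-9} follows automatically once \eqref{8-8} is established, and so I focus on proving \eqref{8-8}.

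For the bootstrap, I define $T^* := \sup\{t \le \min\{T_0/\varepsilon^2, L/(K\varepsilon)\} : \|w(s)\|_{H^1} \le c\varepsilon_v \text{ on } [0,t]\}$; by \eqref{5-18} and continuity $T^* > 0$ for $c$ large enough. On $[0, T^*]$ the previous paragraph gives $|\beta|_\infty \le c'\varepsilon_v^2$. Integrating Proposition \ref{prop7-2} from $0$ to $t\le T^*$ and substituting $\|w\|_{H^1}\le c\varepsilon_v$, $|\beta|_\infty \le c'\varepsilon_v^2$, $\varepsilon_v = \varepsilon^h$ with $h>2$, each of the four terms in the integrand is of order $\varepsilon^2\varepsilon_v^2$ or smaller (the dominant contribution being $\varepsilon^2\|w\|_{H^1}^2$); integrating over $[0, T_0/\varepsilon^2]$ therefore produces at most $CT_0\varepsilon_v^2$. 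Together with $\mathcal{L}(0) = O(\varepsilon_v^2)$, which follows from \eqref{5-18} and a Taylor expansion of $\mathcal{E}_{\mu,\varepsilon}$ at its critical point $\eta_{\mu,\varepsilon}$, this yields
\[
\mathcal{L}(t) \le C_1(1+T_0)\varepsilon_v^2, \qquad t \in [0, T^*].
\]
Applying Lemma \ref{lem8-2} and absorbing the cubic correction $\sum_l \|w_l\|_{H^1}^3 \le c\varepsilon_v \sum_l \|w_l\|_{H^1}^2$ into the quadratic term on the left (permissible once $\varepsilon_v$ is small) produces $\sum_l \|w_l\|_{H^1}^2 \le (2C_1/\rho_0)(1+T_0)\varepsilon_v^2$, and then \eqref{6-7} promotes this to $\|w(t)\|_{H^1}^2 \le C_2(1+T_0)\varepsilon_v^2/\rho_0$.

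The constants are then fixed in the order $c$, then $T_0$, then $\varepsilon$: pick $c^2 > 4C_2/\rho_0$, then choose $T_0>0$ small enough so that $C_2(1+T_0)/\rho_0 \le c^2/4$. This gives $\|w(t)\|_{H^1} \le (c/2)\varepsilon_v$ strictly on $[0, T^*]$, and continuity forces $T^* = \min\{T_0/\varepsilon^2, L/(K\varepsilon)\}$, as desired; \eqref{8-9} then follows from the first paragraph. The main obstacle I anticipate is checking that each error term in Proposition \ref{prop7-2}, under $\varepsilon_v = \varepsilon^h$ with $h>2$, really does integrate to $O(\varepsilon_v^2)$ on precisely the interval $[0, T_0/\varepsilon^2]$; the borderline term is $\int_0^t \varepsilon^2\|w\|_{H^1}^2\,ds$, which both fixes the value of $T_0$ and determines the length of the bootstrap window. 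A subsidiary concern is confirming that $\boldsymbol{\sigma}(\psi(t))\in \boldsymbol{\Sigma}$ throughout $[0,T^*]$ so that the skew-orthogonal decomposition of Proposition \ref{prop5-2} remains valid; the extra constraint $t \le L/(K\varepsilon)$, combined with $|\dot a_j| \le C\varepsilon$ coming from \eqref{6-34}, keeps each $a_j(t)$ inside $B_L(a_{j,0})$, and the other components of $\boldsymbol{\sigma}(t)$ are controlled similarly.
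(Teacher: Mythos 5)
Your proposal is correct and follows essentially the same route as the paper: bound the Lyapunov functional $\sum_l(\mathcal{E}_{\mu_l,\varepsilon}(u_l)-\mathcal{E}_{\mu_l,\varepsilon}(\eta_{\mu_l,\varepsilon}))$ above by integrating Proposition \ref{prop7-2} from the $O(\varepsilon_v^2)$ initial value, below by the coercivity of Lemma \ref{lem8-2} combined with \eqref{6-7}, and recover \eqref{8-9} from $\beta=X_\delta+O(e^{-c/\varepsilon})$ together with \eqref{6-44}. The only (harmless) difference is presentational: you close the loop with an explicit continuity argument at the threshold $c\varepsilon_v$, whereas the paper absorbs the cubic term using the a priori bound $\|w\|_{H^1}\le\delta$ from $\psi\in U_\delta$ and fixes the time scale by the relation $c''t(|\beta|_\infty+\varepsilon^2)=\rho_0/8k$.
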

\begin{proof}
	By \eqref{5-18}, we have $||w(0)||_{H^1}\le c\varepsilon_v$, which implies 
	\begin{equation}\label{8-10}
	\sum\limits_{l=1}^k||w_j(0)||_{H^1}\le c\varepsilon_v
	\end{equation}
	from \eqref{6-7}. So for radial ground state solutions $\eta_{\mu_j,\varepsilon}$ where $j=1,\cdots,k$, we have
	\begin{equation}\label{8-11}
	2\sum\limits_{l=1}^k\big(\mathcal{E}_{\mu_l,\varepsilon}(u_l(0))-\mathcal{E}_{\mu_l,\varepsilon}(\eta_{\mu_l,\varepsilon}(0))\big)=2\sum\limits_{l=1}^k\big(||\varepsilon\nabla w_l(0)||_{L^2}^2+\mu_l||w_j(0)||_{L^2}^2-R_{\eta_{\mu_l,\varepsilon}}^{(2)}(w_l(0))\big).
	\end{equation}
	According to \eqref{3-16}, it holds
	\begin{equation}\label{8-12}
	|R_{\eta_{\mu_j,\varepsilon}}^{(2)}(w(0))|\le C||w_j(0)||_{H^1}^2.
	\end{equation}
	So we have 
	\begin{equation}\label{8-13}
	2\sum\limits_{l=1}^k\big(\mathcal{E}_{\mu_l,\varepsilon}(u_l(0))-\mathcal{E}_{\mu_l,\varepsilon}(\eta_{\mu_l,\varepsilon}(0))\big)\le c\varepsilon_v^2
	\end{equation}
	On the other hand, from Lemma \ref{lem8-2} we know that
	\begin{equation}\label{8-14}
	2\sum\limits_{l=1}^k\big(\mathcal{E}_{\mu_l,\varepsilon}(u_l)-\mathcal{E}_{\mu_l,\varepsilon}(\eta_{\mu_l,\varepsilon})\big)\ge\rho_0 \sum\limits_{l=1}^k||w_l||_{H^1}^2-C\sum\limits_{l=1}^k||w_l||_{H^1}^3+O(e^{-\frac{c}{\varepsilon}}),
	\end{equation}
	where $\rho_0=\min\{\rho_1,\cdots,\rho_k\}$. Then we can use Proposition \ref{prop7-2} to integral energy from $0$ to $t$, and deduce that for any $s\in (0,t]$ it holds:
	\begin{equation}\label{8-15}
	\begin{split}
	\rho_0 \sum\limits_{l=1}^k||w_l(s)&||_{H^1}^2-C\sum\limits_{l=1}^k||w_l(s)||_{H^1}^3+O(e^{-\frac{c}{\varepsilon}})\\
	&\le 2\sum\limits_{l=1}^k\big(\mathcal{E}_{\mu_l,\varepsilon}(u_l(0))-\mathcal{E}_{\mu_l,\varepsilon}(\eta_{\mu_l,\varepsilon}(0))\big)+2t\partial_t\big(	\sum\limits_{l=1}^k\mathcal{E}_{\mu_l,\varepsilon}(u_l)-\sum\limits_{l=1}^k\mathcal{E}_{\mu_l}(\eta_{\mu_l,\varepsilon})\big)\\
	&\le c\varepsilon_v^2+c''t\big(|\beta|\cdot\big|||w||\big|_t^2+\varepsilon^2\big|||w||\big|_t^2+\varepsilon\cdot\varepsilon_v^2\big|||w||\big|_t+O(e^{-\frac{c}{\varepsilon}})\big),
	\end{split}
	\end{equation}
	where we have used \eqref{8-13} in the last inequality. By \eqref{6-7} and Cauchy-Schwarz inequality, we have
	$$\sum\limits_{l=1}^k||w_l(s)||_{H^1}^2\ge \frac{1}{k}(\sum\limits_{l=1}^k||w_l(s)||_{H^1})^2\ge\frac{1}{4k}||w(s)||_{H^1}^2$$
	and
	$$\sum\limits_{l=1}^k||w_l(s)||_{H^1}^3\le(\sum\limits_{l=1}^k||w_l(s)||_{H^1})^3\le C||w(s)||_{H^1}^3.$$
	Thus for $t$ satisfying $c''t(|\beta|_\infty+\varepsilon^2)= \rho_0/8k$, we can take the $\sup_{s\in(0,t]}$ of $||w(s)||_{H^1}$ to obtain
	\begin{equation}\label{8-16}
	\frac{\rho_0}{8k} \big|||w||\big|_t^2\le c\varepsilon_v^2+o(1)\cdot\varepsilon_v\big|||w||\big|_t+C\big|||w||\big|_t^3+O(e^{-\frac{c}{\varepsilon}}),
	\end{equation}
	which implies that for $t= \rho_0(8kc''(|\beta|_\infty+\varepsilon^2))^{-1}$, it holds
	\begin{equation}\label{8-17}
	\big|||w||\big|_t^2\le c\varepsilon_v^2
	\end{equation}
	provided $\big|||w||\big|_t\le \rho_0/16kC$ and $\varepsilon$ is sufficiently small. Since $\psi\in U_\delta$, it is always true that $\big|||w||\big|_t\le \delta$ and we can let $\delta\le\rho_0/16kC$. From \eqref{6-42} we have $\beta_{j,m}=X_\delta^{j,m}+O(e^{-\frac{c}{\varepsilon}})$. Combining this and \eqref{8-17}, we can deduce that
	\begin{equation}\label{8-18}
	|\beta|_\infty\le c'\varepsilon_v^2
	\end{equation}
	for $t\le \rho_0(8kc''(\varepsilon_v^2+\varepsilon^2))^{-1}$. According to condition (A) in Section 2 we have $\varepsilon_v=\varepsilon^h$ for some $h>2$, while $\psi\in U_\delta$ requires that $t\le \frac{L}{K\varepsilon}$. Thus \eqref{8-8} and \eqref{8-9} hold over the time interval
	\begin{equation}\label{8-19}
	0<t\le \min\{\frac{T_0}{\varepsilon^2},\frac{L}{K\varepsilon}\},
	\end{equation}
	with $T_0=\rho_0/16kc''<\infty$ a positive constant, which completes the proof of this proposition.
\end{proof}

Now we are ready to prove Theorem \ref{thm1}.

{\bf Proof of Theorem \ref{thm1}:}
From \eqref{6-34}-\eqref{6-37} and Proposition \ref{prop8-3}, for $t\in (0,\min\{\frac{T_0}{\varepsilon^2},\frac{L}{K\varepsilon}\}]$ and $i=1,\cdots,k$ we have
\begin{equation}\label{8-20}
\frac{1}{2}\dot{v}_i=-\varepsilon\nabla V_\varepsilon(a_i)+O(\varepsilon_v^2),
\end{equation}
\begin{equation}\label{8-21}
\frac{\dot{a}_i}{\varepsilon^2}=\frac{v_i}{\varepsilon}+O(\varepsilon_v^2),
\end{equation}
\begin{equation}\label{8-22}
\dot{\gamma}_i=\mu_i-V_\varepsilon(a_i)-\frac{1}{4}v_i^2+O(\varepsilon_v^2),
\end{equation}
\begin{equation}\label{8-23}
\dot{\mu}_i=O(\varepsilon_v^2)
\end{equation}
Note that all the estimates we have done are about the solutions to \eqref{1-3}. However, our aim is to derive the corresponding result for Cauchy problem \eqref{1-1} and \eqref{1-2}. For this purpose, let $t'$ be the time parameter in \eqref{1-1} and \eqref{1-2}, then
\begin{equation}\label{8-24}
t'=\varepsilon t, \ \ \ \ \ \varepsilon\partial_{t'}=\partial_t,
\end{equation}
where $t$ is the time parameter in \eqref{1-3}. So that we can rewrite \eqref{8-20}-\eqref{8-23} to get the asymptotic estimates for Cauchy problem \eqref{1-1} and \eqref{1-2} over the time interval $(0,\min\{\frac{T_0}{\varepsilon},\frac{L}{K}\}]$. In conclusion, we have
\begin{equation}\label{8-25}
||w||_{H^1}=O(\varepsilon_v) 
\end{equation}
and
\begin{equation}\label{8-26}
\frac{1}{2}\dot{v}_i=-\nabla V_\varepsilon(a_i)+O(\frac{\varepsilon_v^2}{\varepsilon}),
\end{equation}
\begin{equation}\label{8-27}
\dot{a}_i=v_i+O(\varepsilon\cdot\varepsilon_v^2),
\end{equation}
\begin{equation}\label{8-28}
\varepsilon\dot{\gamma}_i=\mu_i-V_\varepsilon(a_i)-\frac{1}{4}v_i^2+O(\varepsilon_v^2),
\end{equation}
\begin{equation}\label{8-29}
\varepsilon\dot{\mu}_i=O(\varepsilon_v^2),
\end{equation}
with $i=1,\cdots,k$, and the proof of Theorem \ref{thm1} is therefore complete. \qed \\

Corrollary \ref{coro1} then follows as a consequence of \eqref{5-17} and Theorem \ref{thm1}, which can be used to prove Theorem \ref{thm2}.

{\bf Proof of Theorem \ref{thm2}:}
Let $t_1=\frac{d}{6K}$. Since assumptions (A) (B) and (C) are satisfied with $L=\frac{d}{6}$, by Corollary \ref{coro1}, for $i=1,\cdots,k$ we have $\gamma_i(t_1)\in \mathbb{R}\setminus (0,2\pi]$ and
\begin{equation}\label{8-30}
	v_i(t_1)=v_{i,0}+O(\varepsilon_v),
\end{equation}
\begin{equation}\label{8-31}
	a_i(t_1)=a_{i,0}+t_1v_{i,0}+O(\varepsilon_v),
\end{equation}
\begin{equation}\label{8-32}
	\mu_i(t_1)=	\mu_{i,0}+O(\frac{\varepsilon_v^2}{\varepsilon}).
\end{equation}
Denote $\psi_0^1=\psi(t_1)$ and $\sigma_{i,0}^1=(a_i(t_1),v_i(t_1),\gamma_i(t_1),\mu_i(t_1))$ for $i=1,\cdots,k$. Notice that assumption (D) is satisfied, then by \eqref{8-30} \eqref{8-31} \eqref{8-32}, assumptions (C) is satisfied for $\psi_0^1$ and $\sigma_{i,0}^1$ with $L=\frac{d}{6}$. Using a similar argument, we can extend the maximum time to $t_2=\frac{d}{3K}$ and verify that $\psi_0^2=\psi(t_2)$ and $\sigma_{i,0}^2=(a_i(t_2),v_i(t_2),\gamma_i(t_2),\mu_i(t_2))$ satisfy assumptions (C) with $L=\frac{d}{6}$ when $\varepsilon$ is sufficiently small.
	
Since $\varepsilon_v=\varepsilon^h$ with $h>2$, we can bootstrap to extend the maximum time to $\frac{T_0}{\varepsilon}$, which means the multi-soliton dynamics \eqref{2-3}-\eqref{2-6} can hold for every $t\in(0,T]$ with $T\to +\infty$ as $\varepsilon\to 0$. \qed \\


\phantom{s}
\thispagestyle{empty}

\end{document}